\documentclass[english]{amsart}
\usepackage{graphicx} 
\usepackage{amsmath,amssymb,amsthm,xcolor}
\usepackage{graphicx,tikz}
\usepackage[colorlinks=true, allcolors=blue]{hyperref}
\usepackage{circuitikz}
\usepackage{listings}
\usepackage{xcolor}
\usepackage{multirow}
\usepackage{float}

\setlength{\arrayrulewidth}{0.5mm}
\setlength{\tabcolsep}{14pt}

\usepackage{geometry}
\geometry{letterpaper, total={6in, 9in}}

\definecolor{codegreen}{rgb}{0,0.6,0}
\definecolor{codegray}{rgb}{0.5,0.5,0.5}
\definecolor{codepurple}{rgb}{0.58,0,0.82}
\definecolor{backcolour}{rgb}{0.95,0.95,0.92}

\lstdefinestyle{mystyle}{
    backgroundcolor=\color{backcolour},   
    commentstyle=\color{codegreen},
    keywordstyle=\color{magenta},
    numberstyle=\tiny\color{codegray},
    stringstyle=\color{codepurple},
    basicstyle=\ttfamily\footnotesize,
    breakatwhitespace=false,         
    breaklines=true,                 
    captionpos=b,                    
    keepspaces=true,                 
    numbers=left,                    
    numbersep=5pt,                  
    showspaces=false,                
    showstringspaces=false,
    showtabs=false,                  
    tabsize=2
}

\lstset{style=mystyle}

\usetikzlibrary{shapes.geometric}

\renewcommand{\epsilon}{\varepsilon}

\usepackage{cjhebrew}

\definecolor{scarlet}{rgb}{1.0, 0.13, 0.0}
\definecolor{cardinal}{HTML}{840A2D}
\definecolor{illiniorange}{HTML}{F38025}

\theoremstyle{theorem}
\newtheorem{thm}{Theorem}[section]

\newtheorem{lemma}[thm]{Lemma}

\theoremstyle{definition}
\newtheorem{defn}[thm]{Definition}
\newtheorem{example}[thm]{Example}
\newtheorem{question}[thm]{Question}
\newtheorem{remark}[thm]{Remark}

\title{The Edge-Distinguishing Game}
\author[Benjamin, Benthem, Burkel, Chesser, Janssen]{Nathaniel Benjamin, Elisa Benthem, Cooper Burkel, Marissa Chesser, and Mike Janssen}
\date{\today}

\begin{document}

\maketitle

\begin{abstract}
In this paper, we introduce a graph coloring game called the Edge-Distinguishing Game (EDGe). The edge-distinguishing chromatic number of a graph is used to determine the moves each player can make. We determine which player has a winning strategy for particular graphs and graph families. Additionally, utilizing principles from game theory as well as previous work \cite{garcia2025computational} on a computational solution for the Game of Cycles. 
    
\end{abstract}

\section{Introduction}\label{sec:intro}

The concept of an edge-distinguishing coloring of a graph was first introduced by Frank et al. in 1982 \cite{frank1982edcn}. Subsequent authors used this method of vertex coloring to determine the \emph{edge-distinguishing chromatic number} (EDCN) for various families of graphs \cite{al1988edge, fickes2021edge} such as paths, cycles, petal graphs, and spider graphs with 4-legs.

In this context, a coloring of a graph is defined in the following way. Let $[k] = \{1,2,3,\dots,k\}$ be the set of colors, and let $\big(\binom{[k]}{i}\big)$ denote the $i$-subsets of $[k]$, where each set is a multiset.

\begin{defn}\label{def:k-coloring}
    Let $G$ be a graph with vertex set $V(G)$ and edge set $E(G)$. A \emph{$k$-coloring} of a graph, is a labeling $c: V(G) \rightarrow [k]$. The \emph{induced edge-coloring} $c': E(G) \rightarrow$ $\big(\binom{[k]}{2}\big)$ is defined by $c'(\{v_i, v_j\}) = \{c(v_i), c(v_j)\}$. A \emph{partial $k$-coloring} is a $k$-coloring on a subset of the vertices of $G$, i.e. $c:U\to[k]$ for $U\subseteq V(G)$. A \emph{partial induced edge-coloring} is the induced edge coloring of a partial $k$-coloring on the induced subgraph $G[U]$.
\end{defn}
An edge-distinguishing coloring can then be defined as follows. 

\begin{defn}\label{def:edge-distinguishing}
    Let $c$ be a $k$-coloring of a graph $G$, and $c'$ the induced edge-coloring. If $c'$ is injective, $c$ is called \emph{edge-distinguishing}. The \emph{edge-distinguishing chromatic number} (EDCN) of $G$, denoted $\lambda(G)$, is the smallest integer $k$ such that an edge-distinguishing $k$-coloring exists. 
\end{defn}

An example of an edge-distinguishing coloring of a graph is shown in Figure \ref{fig:P_5propergraph}. 

    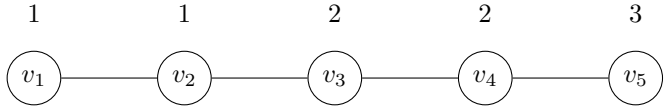
\begin{figure}[H]
    \centering
    
    \begin{tikzpicture}
    \node[draw,circle] (v1) {$v_1$};
    \node at ([shift={(90:0.5)}]v1.90) {1}; 
    
    \path (v1) ++(0:2) node [draw,circle] (v2) {$v_2$};
    \node at ([shift={(90:0.5)}]v2.90) {1}; 

    \path (v1) ++(0:4) node [draw,circle] (v3) {$v_3$};
    \node at ([shift={(90:0.5)}]v3.90) {2}; 

    \path (v1) ++(0:6) node [draw,circle] (v4) {$v_4$};
    \node at ([shift={(90:0.5)}]v4.90) {2}; 

    \path (v1) ++(0:8) node [draw,circle] (v5) {$v_5$};
    \node at ([shift={(90:0.5)}]v5.90) {3}; 

    \draw (v1) -- (v2) -- (v3) -- (v4) -- (v5);
    \end{tikzpicture}
    \caption{$P_5$ with an edge-distinguishing $3$-coloring.}\label{fig:P_5propergraph}
    \end{figure}

The primary purpose of this work is to introduce and investigate a game that utilizes the edge-distinguishing concept underlying the EDCN. 
We define the game as follows.

\begin{defn}\label{def:EDGe-rules}
    Let $G$ be a finite simple graph, sometimes called the \emph{game board}, with $\lambda(G) = k$.
    The \emph{\underline{e}dge-\underline{d}istinguishing \underline{g}am\underline{e}}, or \emph{EDGe}, is a two-player game played on $G$ according to the following rules.
    \begin{enumerate}
        \item Players take turns labeling uncolored vertices with an available color from the list $[k]$.
        \item A \emph{legal move} is one where the color played results in an edge-distinguishing partial induced edge coloring on the set of colored vertices. 
        \item The player that makes the last legal move wins.
    \end{enumerate}
    If, during the course of a game, a given vertex can no longer be colored without inducing a nonunique edge coloring, we call that vertex \emph{unmarkable}.
\end{defn}

\begin{example}\label{ex:EDGe-gameplay}
    According to \cite{frank1982edcn, al1988edge}, $\lambda(P_5) = 3$.
    Consider the game state in Figure \ref{fig:P_5partial} that could occur partway through a game of EDGe on $P_5$.

    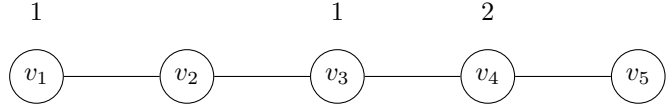
\begin{figure}[H]
        \centering
        \begin{tikzpicture}
        \node[draw,circle] (v1) {$v_1$};
        \node at ([shift={(90:0.5)}]v1.90) {1}; 
        
        \path (v1) ++(0:2) node [draw,circle] (v2) {$v_2$};
    
        \path (v1) ++(0:4) node [draw,circle] (v3) {$v_3$};
        \node at ([shift={(90:0.5)}]v3.90) {1}; 
    
        \path (v1) ++(0:6) node [draw,circle] (v4) {$v_4$};
        \node at ([shift={(90:0.5)}]v4.90) {2}; 
    
        \path (v1) ++(0:8) node [draw,circle] (v5) {$v_5$};
    
        \draw (v1) -- (v2) -- (v3) -- (v4) -- (v5);
        \end{tikzpicture}
        \caption{A partially played $P_5$.}\label{fig:P_5partial}
    \end{figure}

Because there are three vertices labeled, it is Player 2's turn. We see that $v_2$ is an unmarkable vertex because it is adjacent to two vertices of the same color.  
Therefore, the only \emph{markable vertex} is $v_5$. We know from the game board that vertices $v_3$ and $v_4$ induce an edge coloring of $\{1, 2\}$, so Player 2 cannot label $v_5$ with $1$ as it will induce that same edge color. Instead, Player 2 can label $v_5$ with $3$ and thus win the game, since no other vertex can be legally labeled. As shown here, we will usually assume without loss of generality that if a player plays a new color that has not yet been played, then the first unused color is played.
\end{example}

We seek to determine which player has a \emph{winning strategy} for various graphs and families of graphs.
Since EDGe is a finite, impartial, 2-player game with no draws, Zermelo's Theorem \cite{Amir2017} applies, implying that one player must have a winning strategy.

In Section \ref{sec:computation}, we briefly discuss code that produces the winning player on any graph. In Section \ref{sec:results}, we discuss some unique strategies and solutions for certain graphs and families; our results from this section are summarized in Table \ref{tab:results}.

\begin{table}[h!]\label{tab:results}
\centering
    \begin{tabular}{ |c|c|c|  }
        \hline
        Graphs& Player 1& Player 2\\
        \hline
        Complete & $K^*_{2n+1}$ & $K_n, K^*_{2n}$\\
        Complete Bipartite & $K_{n,m}$ ($m+n$ odd) & $K_{n,m}$ ($m+n$ even)\\
        Wheel & $W_{2n+1}$ & $W_{2n}$\\
        Paths & $P_1, P_3, P_4, P_5$ & $P_2, P_6, P_7^\dagger$\\
        Cycles & $C_5, C_6$ & $C_3, C_4, C_7, C_8^\dagger$\\
        Chorded Cycles$^\dagger$ & $C_5^{\{1,3\}}, C_6^{\{1,3\}}, C_8^{\{1,3\}}$ & $C_4^{\{1,3\}}, C_7^{\{1,3\}}$\\
        Triangular Ladder & $T_1$ & $T_2, T_3, T_4, T_5^\dagger, T_6^\dagger, T_7^\dagger, T_8^\dagger$\\
        Miscellaneous & Moser Spindle, Petersen$^\dagger$, Envelope$^\dagger$ & Cube$^\dagger$, Octahedron$^\dagger$, Book\\
        \hline
    \end{tabular}
    \caption{Summary of Results. $\dagger$ represents graphs whose winnability was checked by computer.}
    \label{table:summary}
\end{table}

In addition to determining which players have winning strategies for various graphs, we also show in Section \ref{sec:results} that the requirement in Definition \ref{def:EDGe-rules} that $\lambda(G)$ colors should be used when playing the game on $G$ is not extraneous as it is possible that the player with the winning strategy on $G$ can change if more colors are allowed.

\begin{thm}[Theorems \ref{thm:P6} and \ref{thm:numcolors}]
    On $P_6$, Player 1 has the winning strategy with $\lambda(P_6)$ colors, while Player 2 has the winning strategy if $\lambda(P_6)+1$ colors are used. In particular, for a fixed game board, the number of colors available can change which player has the winning strategy.
\end{thm}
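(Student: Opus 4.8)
The plan is to first pin down $\lambda(P_6)$, since the two regimes in the theorem are "$\lambda(P_6)$ colors" and "$\lambda(P_6)+1$ colors." Because $P_6$ has five edges but there are only three distinct $2$-element multisubsets of $[2]$, no edge-distinguishing $2$-coloring exists; on the other hand the coloring $(c(v_1),\dots,c(v_6))=(1,1,2,2,3,3)$ induces the five distinct edge-colors $\{1,1\},\{1,2\},\{2,2\},\{2,3\},\{3,3\}$, so $\lambda(P_6)=3$ (this also follows from the path formula in \cite{frank1982edcn, al1988edge}). Thus Theorem \ref{thm:P6} is the "$3$ colors" case and Theorem \ref{thm:numcolors} is the "$4$ colors" case. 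Throughout I will exploit the reflection symmetry $v_i\leftrightarrow v_{7-i}$ of $P_6$ to halve the number of openings to consider, and I will argue via parity: EDGe is a normal-play game (the last legal move wins), so Player $1$ has the winning strategy exactly when optimal play lasts an odd number of moves.

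For Theorem \ref{thm:P6} I would give an explicit winning opening for Player $1$ together with a complete reply tree. The search is small: up to reflection there are only the three openings $v_1$, $v_2$, $v_3$, and with three colors the palette of available edge-colors has size $\binom{4}{2}=6$ for five edges, so any completed legal coloring of $P_6$ must be a permutation (possibly reflected) of the staircase $1,1,2,2,3,3$. Consequently small deviations strand vertices as unmarkable quickly, and Player $1$'s task is to pick the opening and the follow-up replies so that these strandings always occur with an odd number of moves played. I expect the write-up to be a bounded case analysis: fix Player $1$'s opening, and for each of the few legal replies of Player $2$ (further cut down by symmetry) exhibit Player $1$'s response and continue to the leaves, verifying each leaf occurs after move $1$, $3$, or $5$. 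The only genuine work is correctly identifying which of the three openings wins and faithfully carrying the tree to its leaves.

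For Theorem \ref{thm:numcolors} the situation flips: with four colors there are $\binom{5}{2}=10$ available edge-colors for five edges, so the board is far less constrained and in most lines all six vertices get colored, a six-move game that Player $2$ wins on parity. The goal is therefore to hand Player $2$ a response strategy that never runs out, i.e., a pairing of $V(P_6)$ into three pairs such that whenever Player $1$ colors one vertex of a pair, Player $2$ can legally color its partner; then Player $2$ always moves after Player $1$ and makes the sixth and final move. The natural candidate is the reflection pairing $\{v_1,v_6\},\{v_2,v_5\},\{v_3,v_4\}$, but Player $2$ must choose the partner's color adaptively rather than by a fixed rule, since a palindromic coloring $c(v_i)=c(v_{7-i})$ duplicates the edge-colors of $v_1v_2$ and $v_5v_6$; the fourth color provides exactly the slack needed to avoid this. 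The crux, and the step I expect to be the main obstacle, is proving a legal partner color always exists: one must show that the at most two already-colored neighbors of the partner vertex, together with the at most four edge-colors used so far, cannot simultaneously forbid all four colors at that vertex — a finite check on the local configuration in which the fourth color is what makes the count close. Should the clean pairing have an exceptional branch, the fallback is a direct case analysis on Player $1$'s opening (again only three cases up to symmetry) with an explicit reply tree.

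Combining the two halves yields the "in particular" clause at once: on the fixed board $P_6$, Player $1$ has the winning strategy with $\lambda(P_6)$ colors and Player $2$ has it with $\lambda(P_6)+1$ colors, so the stipulation in Definition \ref{def:EDGe-rules} that exactly $\lambda(G)$ colors be used is not extraneous. Of the two halves, I anticipate the $4$-color case to be harder to write cleanly, precisely because the game is longer and the reachable positions more numerous than in the tightly constrained $3$-color case, so the existence-of-a-reply verification must be handled with care.
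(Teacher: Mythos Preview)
Your plan for the $3$-color half is sound and matches the paper: Player $1$ opens with $1$ on $v_2$ and then, depending on whether Player $2$ avoids $v_5$, plays $1$ on $v_5$ (making every remaining vertex adjacent to a $1$, so only three more moves are possible) or $1$ on $v_4$ (stranding $v_3$); either way the game ends on move $5$. One aside: your structural remark that every completed edge-distinguishing $3$-coloring of $P_6$ is a color-permutation or reflection of $1,1,2,2,3,3$ is false---for instance $1,2,2,3,3,1$ induces the five distinct edge-colors $\{1,2\},\{2,2\},\{2,3\},\{3,3\},\{1,3\}$---but you use this only as heuristic motivation, so the plan survives.

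The reflection pairing $\{v_1,v_6\},\{v_2,v_5\},\{v_3,v_4\}$ for the $4$-color half does not merely have an ``exceptional branch''; it fails outright, and not for the reason you anticipate. After Player $1$ opens $1$ on $v_2$, you respond on $v_5$ with any color, and Player $1$ then plays $1$ on $v_4$. Now the paired vertex $v_3$ has \emph{both} neighbors colored $1$, so every color there creates two copies of the same edge-color; this is structural, and no adaptive color choice rescues it. Once the pairing breaks, Player $2$ is forced onto $v_1$ or $v_6$, Player $1$ legally colors the other (at most two edge-colors exist to avoid, against four choices), and the game ends on move $5$ with $v_3$ still dead---a Player $1$ win. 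So your primary strategy must be discarded entirely in favor of the fallback. Relatedly, your intuition that with four colors ``in most lines all six vertices get colored'' is misleading: in the paper's proof several of Player $2$'s winning lines deliberately terminate at move $4$ by creating two unmarkable vertices.

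The paper's Player $2$ first reply is not a vertex pairing at all. Player $2$ copies Player $1$'s color, placing it on $v_2,v_1,v_6,v_5$ when Player $1$ opened on $v_1,v_2,v_5,v_6$ respectively (manufacturing a $\{1,1\}$ edge at one end of the path), and on $v_6,v_1$ when Player $1$ opened on $v_3,v_4$. From each of the two resulting canonical positions the paper runs a subcase analysis on Player $1$'s second move (nine subcases in all). So your fallback---direct case analysis from the opening---is the right method, but the first reply that makes the tree tractable is quite different from what the reflection idea suggests.
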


We conclude the paper with ideas for future work in Section \ref{sec:future-work}.

\medskip

\noindent\textbf{Acknowledgements.} This work was completed while the second and third authors were undergraduate research students during the summer of 2024. All authors wish to gratefully thank the Dordt University Kielstra Center for Research and Grants for its generous financial support for the completion of this project.

\section{A Game-Theoretic Perspective}\label{sec:computation}

We adapted a Python program used to determine winning strategies for the Game of Cycles \cite{garcia2025computational} to apply to EDGe. The program returns which player has the winning strategy for a given graph. In addition, the code produces a game tree, which is a directed graph that describes all of the legal game states as well as the possible moves from state to state. An example for the game on $P_3$ is provided in Figure \ref{fig:P_3digraph}. 

\begin{figure}[h]
    \centering
    \includegraphics[scale=0.65]{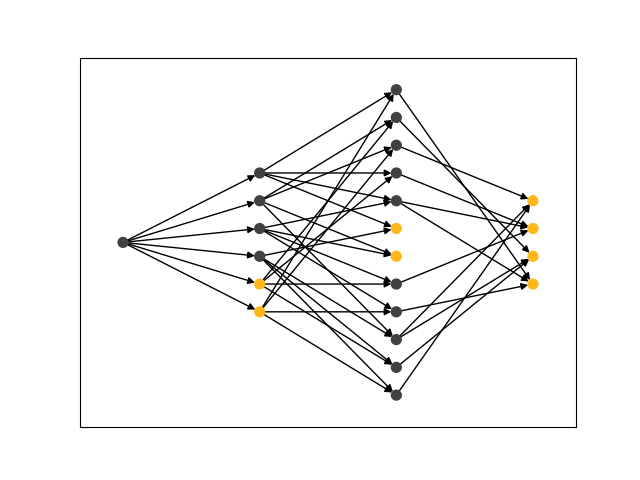}
    \caption{The game tree digraph for $P_3$}
    \label{fig:P_3digraph}
\end{figure}

Each node in the digraph represents a possible game state, with the initial node representing the empty board. Each edge indicates that it is legal for a player to move from the games state of the tail node to the game state of the head node. We can then label each node based on which player is in a winning position.

\begin{defn}\label{def:P-N}
    A $P$-\emph{position} means the \emph{previous} player, the one who has just played, is in a winning position. An $N$-\emph{position} means the \emph{next} player to play is in a winning position. 
\end{defn}

For an impartial game, every game state must be either a $P$-position or an $N$-position.

\begin{defn}\label{def:terminating-playable}
    \emph{Terminating boards} must either be fully colored or every uncolored vertex must be unmarkable. In any other case, the board is considered \emph{playable} or \emph{markable}. 
\end{defn}

Boards in a $P$-position must terminate or lead to $N$-position boards, while boards in an $N$-position lead to at least one $P$-position board.

This means that the goal of both players is to put the board into a $P$-position after the completion of their move. Since $P$-positions only lead to $N$-positions, once a player has made that move, it is impossible for the next player to change who is in a winning position. The only way to alter who wins the game is for a player to make a move to an $N$-position when a $P$-position is available, because it allows for the other player to put the game in their favor. In Figure \ref{fig:P_3digraph}, we have colored all $P$-positions as \emph{yellow} and all $N$-positions \emph{gray}. As the initial node represents the empty board, if the initial node is gray (i.e. an $N$-position), Player 1 has a winning strategy, while a yellow initial node (i.e. a $P$-position), indicates Player 2 has the winning strategy. Therefore, whichever player has the winning strategy can use the directed graph to follow the moves which lead to game states marked by yellow nodes, ensuring victory.

Our code can be found at \cite{edge_game_python} and the main parts of the algorithm can be found in the file \texttt{EDGe-Algorithm-Supplement.pdf}, posted in that GitHub repository. We encourage the reader to explore the game on their own.

\section{Results}\label{sec:results}

In this section, we determine which player has the winning strategy for several entire families of graphs, some particular cases within larger families, and for a number of other specific graphs.

\begin{remark}
    Boards that are isomorphic play the same in EDGe. This fact will be used without comment in several of the proofs in this section.
\end{remark}

\subsection{Completely Solved Families}\label{subsec:solved-families}

In this first subsection, we determine which player has the winning strategy for complete graphs, complete bipartite graphs, wheel graphs, and triangular book graphs.

\begin{thm}\label{thm:complete}
    For the complete graph $K_n$ with $n \ge 2$, Player 2 has the winning strategy. 
\end{thm}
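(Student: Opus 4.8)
The plan is to understand the structure of EDGe on $K_n$ by first pinning down $\lambda(K_n)$ and then analyzing how long the game can last. For $K_n$, every pair of vertices is an edge, so an edge-distinguishing $k$-coloring requires that all $\binom{|V|}{2} = \binom{n}{2}$ induced edge-colors be distinct $2$-multisets from $[k]$; moreover two vertices of the same color $i$ would induce the edge-color $\{i,i\}$ twice if there were three such vertices, and in fact in $K_n$ \emph{any} color can be used at most twice (three vertices of color $i$ give the edge-color $\{i,i\}$ on all three edges among them). So a legal (partial) coloring of $K_n$ is exactly an assignment of colors to vertices in which no color is used more than twice and no two monochromatic pairs share a color — i.e., at most one color is repeated... wait, two colors could each be used twice, say $\{a,a\}$ and $\{b,b\}$, and those are distinct edge-colors. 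The real constraint is: each color appears at most twice, and if colors $a \ne b$ both appear (with $a$ possibly repeated), the pair-colors stay distinct automatically since a $2$-multiset determines its underlying pair. The only collisions to rule out are (i) a color used three times, and (ii) two disjoint monochromatic pairs — no, $\{a,a\}\ne\{b,b\}$ — actually the only collision is a color used three or more times. Let me restate: a partial coloring of $V(K_n)$ is legal iff no color class has size $\ge 3$. I would verify this carefully as the first step, and deduce $\lambda(K_n) = \lceil n/2 \rceil$ when $n$ is such that this many colors suffice (pairing up vertices), with the small-case check that this is exactly the EDCN.

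The second and main step is a parity/pairing argument on game length. With $k = \lambda(K_n)$ colors available and the rule "no color class exceeds size $2$," a terminal board is one in which every remaining uncolored vertex is unmarkable. I would argue that in fact the game never stalls early: as long as an uncolored vertex $v$ and an unused color $c$ both exist, coloring $v$ with $c$ is legal, so the game ends only when all vertices are colored or all $k$ colors are already "saturated" in a way that blocks every uncolored vertex. Since $k = \lceil n/2\rceil$ and each color can absorb two vertices, the colors can exactly accommodate all $n$ vertices (with one color used once if $n$ is odd); I expect to show the game always runs to a full coloring of all $n$ vertices, so exactly $n$ moves are made. Then Player 2 makes the last move iff $n$ is even. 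That handles $K_{2n}$ directly — but the theorem claims Player 2 wins \emph{all} $K_n$, $n \ge 2$, including odd $n$, so the naive parity count cannot be the whole story for odd $n$.

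So the crux — and the step I expect to be the real obstacle — is odd $n$. Here $k = (n+1)/2$, there are $n$ vertices, and a full legal coloring uses each of $(n-1)/2$ colors twice and one color once; $n$ is odd, so a game that fills the whole board would have Player 1 make the last move. Player 2 must therefore engineer a board that terminates \emph{early}, with an odd number of vertices colored and some uncolored vertex rendered unmarkable. The strategy I would pursue: Player 2 plays a pairing/mirroring strategy — whenever Player 1 assigns a color, Player 2 "completes" that color by assigning it to another vertex, so that after each of Player 2's moves every used color has been used exactly twice. This keeps the number of colored vertices even after Player 2 moves and forces Player 1 to always \emph{open} a new color. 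After $2m$ moves, $m$ colors are exhausted; once $m = (n-1)/2$, there are $(n-1)/2$ colors used twice ($n-1$ vertices colored), one vertex left, and only one color ($k$ itself) still available — Player 1 colors the last vertex with it, making $n$ moves, Player 1 wins. That's backwards, so instead Player 2 should arrange to be the one who exhausts the colors. I would flip it: Player 1 is forced to open colors, Player 2 closes them, and when only the last vertex and last color remain it is \emph{Player 1's} turn with a forced move — meaning I've mislabeled who moves when. I would resolve this by carefully tracking parity: after Player 1's $i$-th move $2i-1$ vertices are colored; Player 2 responds to reach $2i$. For Player 2 to win, Player 2 must make the final move, so the board must terminate at an \emph{even} count. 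Thus Player 2 wants the board blocked after $2m$ vertices with $m < \lceil n/2 \rceil$... impossible since colors aren't exhausted. The resolution for odd $n$ must instead use \emph{unmarkability}: with a clever choice, after Player 2's move there may be an uncolored vertex that cannot take any remaining color. I would construct this explicitly — e.g., Player 2 mirrors so that two colors are each used twice among four specific vertices adjacent to a common uncolored $v$, forcing $v$ unmarkable while colors remain — then a careful case analysis (likely splitting on Player 1's first move, which WLOG is color $1$ on $v_1$) completes the proof. The even case should fall out as the easy special case of the same pairing strategy.
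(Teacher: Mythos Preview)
Your analysis rests on a miscomputation of $\lambda(K_n)$, and everything downstream collapses once this is fixed. You write that ``the only collision is a color used three or more times'' and conclude that a partial coloring of $K_n$ is legal iff no color class has size $\ge 3$, whence $\lambda(K_n) = \lceil n/2\rceil$. But this is false already with three colored vertices: if $u_1,u_2$ receive color $a$ and a third vertex $v$ receives any color $b\ne a$, then the edges $u_1v$ and $u_2v$ both receive the induced color $\{a,b\}$, violating injectivity. (You checked only that $\{a,a\}\ne\{b,b\}$ and overlooked the cross-edges between color classes.) Consequently, in $K_n$ with $n\ge 3$ \emph{no} color may be repeated in an edge-distinguishing coloring, so $\lambda(K_n)=n$, not $\lceil n/2\rceil$; this is the value cited in the paper from \cite{frank1982edcn}.

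With the correct EDCN in hand, the entire parity/pairing discussion (and the struggle with odd $n$) becomes unnecessary. The paper's argument is a two-line observation: after Player~1 colors some vertex with color $1$, Player~2 colors any other vertex with $1$ as well. Now every remaining vertex $w$ is adjacent to both of these, so whatever color $c$ is placed on $w$ would create two copies of the edge-color $\{1,c\}$ along the two edges from $w$ to the colored pair. Hence every remaining vertex is unmarkable, Player~2 has made the last legal move, and wins. The case $n=2$ is immediate since $\lambda(K_2)=1$ and Player~2 simply completes the board.
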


\begin{proof}
    When $n=2$, we know from \cite{frank1982edcn} that $\lambda(K_2) = 1$ . Therefore, Player 2 simply plays a 1 on the vertex Player 1 did not color, completely coloring the vertices and winning the game.
    
    For $n \ge 3$, it is shown in \cite{frank1982edcn} that $\lambda(K_n) = n$. No matter which color Player 1 plays, if Player 2 plays the same color on another vertex, every other vertex becomes unmarkable. 
    This is because every other vertex is adjacent to these two vertices, so any color Player 1 could attempt to play as their second move will be a part of a triangle where two edges have the same color. Therefore, Player 2 wins by being the last player to be able to make a legal move.
\end{proof}

\begin{thm}\label{thm:bipartite}
    For complete bipartite graphs, $K_{n, m}$, Player 1 has the winning strategy if $n+m$ is odd, and Player 2 has the winning strategy if $n+m$ is even.
\end{thm}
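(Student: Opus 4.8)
The plan is to decide the winner by a parity count, after first pinning down exactly how a play of EDGe on $K_{n,m}$ can end. Throughout write $A,B$ for the two sides of the bipartition with $|A|=n$, $|B|=m$, and assume $n,m\ge 1$. The first step is a structural description of legal positions: a partial coloring of $K_{n,m}$ is edge-distinguishing if and only if the colored vertices of $A$ receive pairwise distinct colors, the colored vertices of $B$ receive pairwise distinct colors (both conditions being forced the moment the opposite side has a colored vertex), and at most one color is used on both sides at once. I expect this to be a short case analysis of when two multisets $\{c(a),c(b)\}$ and $\{c(a'),c(b')\}$ can coincide. Applying it to full colorings yields $\lambda(K_{n,m})=n+m-1$: a color-disjoint assignment to the two sides costs $n+m$ colors, while overlapping in exactly one shared color is legal and optimal.

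The technical heart is a ``no early stalemate'' lemma: once both sides contain at least one colored vertex, no uncolored vertex becomes unmarkable until the board is full. With both sides nonempty, legality already forces distinctness within each side and at most one shared color, so at any uncolored vertex of $A$ the forbidden colors are exactly those already on $A$, together with those on $B$ when a shared color is present; a direct count shows there are fewer than $n+m-1$ such colors, hence an admissible color always remains. Combining this with the easy observation that a vertex on a side whose opposite side is entirely uncolored is always markable (coloring it creates no edges), one obtains a complete list of terminal positions: the full board ($n+m$ moves), or one side fully colored using a repeated color with the other side untouched ($n$ or $m$ moves). In particular a side can acquire a repeated color only while the opposite side is empty, and once it does so the empty side is frozen, forcing play to fill the repeat-bearing side out completely before it can stop.

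With terminal positions classified, the theorem is a parity argument. If $n+m$ is odd, exactly one side, call it $X$, has odd size; Player $1$ opens by coloring a vertex of $X$, which permanently excludes the terminal type with $X$ empty, so play ends after $n+m$ or $|X|$ moves, both odd, and Player $1$ moves last. If $n+m$ is even with $n,m$ both even, all three possible play-lengths $n+m,\,n,\,m$ are even, so Player $2$ necessarily moves last and wins no matter how either side plays (and Player $2$ is never stranded, since on Player $2$'s turn an odd number of moves has been made while every terminal position has an even move-count). If $n+m$ is even with $n,m$ both odd, Player $2$ answers Player $1$'s opening move by coloring a vertex on the \emph{opposite} side; now both sides are nonempty, so by the no-early-stalemate lemma the board fills completely and the game lasts $n+m$ (even) moves.

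The main obstacle is the no-early-stalemate lemma, and within it the honest bookkeeping of forbidden versus available colors, especially when a shared color is present (so that a vertex of $A$ may reuse no color occurring on either side), together with checking that the one-sided positions behave as claimed -- in particular that play cannot stop with a side only partially colored yet already carrying a repeat. After that lemma, the three-way parity check is routine.
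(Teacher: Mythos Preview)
Your proposal is correct, and it is in fact a tidier argument than the paper's. Both proofs share the same opening moves---Player~1 opens on the odd side when $n+m$ is odd, and Player~2 answers on the opposite side when $n+m$ is even---but from there the two diverge. The paper proceeds operationally, branching on the opponent's reply and prescribing explicit counter-moves (e.g., in the odd case, if Player~2 stays on $V_1$ then Player~1 plays a second $1$ on $V_1$ to freeze $V_2$; if Player~2 crosses to $V_2$ then Player~1 arranges a $1$ on each side so that ``each color can be used only once''). You instead classify all terminal positions in advance---full board, or one side completely filled with a repeated color while the other side is untouched---and then finish with a pure parity count, so that after the prescribed first (or first two) moves the winning player needs no further strategy at all. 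Your no-early-stalemate lemma makes precise what the paper handles informally, and your observation that when $n,m$ are both even \emph{every} terminal already has even length (so Player~2 wins without lifting a finger) is a genuine simplification over the paper's uniform mirroring response. The paper's version has the modest advantage of naming concrete moves throughout; your structural decomposition is more robust and would port more readily to related graph families.
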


\begin{proof}
    Assume that we have a complete bipartite graph $K_{n, m}$ with vertex bipartition $V = V_1\cup V_2$. We know from \cite{frank1982edcn} that $\lambda(K_{n, m}) = (n+m) - 1$.
    \item\underline{Case 1:} \emph{The total number of vertices is odd.} If the total number of vertices is odd, this means that the size of exactly one of the parts is odd, say $V_1$ without loss of generality. Player 1's winning strategy is to play $1$ on a vertex from $V_1$. If Player 2 plays on a vertex in $V_1$, Player 1 need only play 1 on another vertex in $V_1$, rendering all vertices in the other side unmarkable, since there are two edges coming from the same colored vertices to each vertex in $V_2$. This guarantees that there will be an even number of markable vertices left to begin Player 2's turn. No further vertices are able to be eliminated by being made unmarkable, since no edges can be assigned a color, thus Player 1 will win by being the last to play. If instead Player 2 plays on a vertex in $V_2$, then Player 1 can force the graph to be completed with a proper edge-distinguishing $k$-coloring. This is because either Player 2 colored a vertex with 1, or Player 1 can color a vertex in $V_2$ with 1, since there are at least two vertices in $V_2$. Notice then that once a color $i$ is used it creates a $\{1,i\}$-edge because there is a vertex colored 1 in $V_1$ and in $V_2$. Hence, each color can be used only once, and there are enough colors remaining to do so, meaning no vertices can be made unmarkable. This is advantageous to Player 1 because there are an odd number of vertices remaining, giving this player the last move. 
    \item\underline{Case 2:} \emph{The total number of vertices is even.} If the total number of vertices is even, Player 2 will win by playing the same color as Player 1 but on a vertex in the opposite set of vertices. Doing so will force the rest of the board to be played to create a proper edge-distinguishing $k$-coloring by the same argument as before. Since Player 1's turn begins with an even number of vertices left, Player 2 will win by completing the board.
\end{proof}

Next, we investigate wheel graphs, which is the join of a cycle with a single vertex, called the \emph{hub}, i.e. $W_n := C_{n-1}\nabla K_1$. First, we need the EDCN.

\begin{lemma}
    $\lambda(W_n) = n$, for all $n$.
\end{lemma}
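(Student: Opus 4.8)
The plan is to prove the equality by establishing the two bounds $\lambda(W_n)\le n$ and $\lambda(W_n)\ge n$ separately; here I assume $n\ge 4$, so that $C_{n-1}$ is a genuine cycle (the case $W_4=K_4$ is consistent, since $\lambda(K_4)=4$ by Theorem~\ref{thm:complete}). Throughout, write $h$ for the hub and $v_1,\dots,v_{n-1}$ for the rim vertices in cyclic order, so the edges of $W_n$ are the $n-1$ spokes $\{h,v_i\}$ together with the $n-1$ rim edges $\{v_i,v_{i+1}\}$ (indices read mod $n-1$).

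For the upper bound I would exhibit an explicit edge-distinguishing $n$-coloring: set $c(v_i)=i$ for $i=1,\dots,n-1$ and $c(h)=n$. The spokes then receive the colors $\{n,1\},\{n,2\},\dots,\{n,n-1\}$, which are pairwise distinct, while the rim edges receive $\{1,2\},\{2,3\},\dots,\{n-2,n-1\},\{n-1,1\}$, which are $2$-subsets of $[n-1]$ and hence never contain $n$; so no spoke color equals a rim color, and one checks directly that the listed rim colors are pairwise distinct when $n\ge 4$. This gives $\lambda(W_n)\le n$.

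For the lower bound, the first observation is that in \emph{any} edge-distinguishing coloring the rim vertices must all receive distinct colors: if $c(v_i)=c(v_j)$ with $i\neq j$, then the spokes $\{h,v_i\}$ and $\{h,v_j\}$ would both be colored $\{c(h),c(v_i)\}$. Hence at least $n-1$ colors are required. To rule out $\lambda(W_n)=n-1$, suppose an edge-distinguishing coloring used only the colors in $[n-1]$. Then the rim uses each color in $[n-1]$ exactly once, so $c(h)$ coincides with $c(v_j)$ for some $j$; after relabeling, assume this common color is $1$. Then the rim edge $\{v_j,v_{j+1}\}$ and the spoke $\{h,v_{j+1}\}$ both receive the color $\{1,c(v_{j+1})\}$ — and these are genuinely different edges, since one contains $h$ and the other does not — contradicting injectivity of the induced edge-coloring. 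Therefore $\lambda(W_n)\ge n$, and combining the two bounds yields $\lambda(W_n)=n$.

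I do not anticipate a real obstacle: both directions are short. The only points needing a little care are the bookkeeping for small wheels (confirming the range of $n$ and the $W_4=K_4$ consistency check) and, in the lower-bound collision, noting that $c(v_{j+1})\neq 1$ — which holds precisely because the rim colors are distinct — so that $\{1,c(v_{j+1})\}$ really is an honest repeated edge color rather than a vacuous coincidence.
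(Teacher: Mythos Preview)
Your proof is correct and follows essentially the same approach as the paper: show that the rim vertices must all receive distinct colors (via the common neighbor $h$), then show the hub cannot share a color with any rim vertex (via the collision between a spoke and an adjacent rim edge). The paper's version is terser and leaves the upper bound implicit---relying on the fact that a rainbow coloring of any simple graph is automatically edge-distinguishing---whereas you spell out the $n$-coloring explicitly and verify it; but the substance of both arguments is the same.
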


\begin{proof}
    Each spoke vertex in a wheel graph is adjacent to the hub vertex, so no two spoke vertices can be the same color. If a spoke shares the same color as the hub, each other vertex adjacent to that spoke can no longer be colored, since they are adjacent to two vertices of the same color. Therefore, there needs to be a unique color for every vertex, meaning $\lambda(W_n) = n$.
\end{proof}

\begin{thm}\label{thm:wheel}
    For the wheel graph on $n$ vertices, $W_n$, for $n \ge 4$, Player 1 has the winning strategy if $n$ is odd, whereas Player 2 has the winning strategy if $n$ is even.
\end{thm}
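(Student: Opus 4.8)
The plan is to exploit the rigidity of EDGe on a wheel once its hub has been colored. Write $u$ for the hub and $v_1,\dots,v_{n-1}$ for the spokes. The key claim is: \emph{from any legal position in which $u$ is colored, the game is forced to terminate after either exactly $n$ moves or exactly $n-2$ moves} — and both numbers have the same parity as $n$. Granting this, the strategies are essentially forced. If $n$ is odd, Player 1 colors $u$ as move $1$ (legal on the empty board); the game then ends after an odd number of moves, so Player 1 makes the last one, and in fact need not play carefully afterwards. If $n$ is even, then immediately after Player 1's first move at most one edge has been induced, so $u$ is still markable; Player 2 colors $u$ on move $2$ (if Player 1 did not already do so), after which the game ends after an even number of moves, so Player 2 makes the last one.

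The substance is therefore the rigidity claim, which I would prove in three steps, using that $\lambda(W_n)=n$ so the game has $n$ colors available. (1) In any legal position in which $u$ is colored, say with color $h$, all colored spokes carry pairwise distinct colors: two spokes colored the same $c$ would induce two edges labeled $\{h,c\}$ at $u$, violating injectivity; in particular at most one spoke carries $h$ itself. (2) Suppose a spoke $p$ is colored $h$; call it a \emph{poison spoke}. Then both cycle-neighbors $q_1,q_2$ of $p$ must be uncolored, since a colored $q_i$ would make the labels of $pq_i$ and $uq_i$ both equal $\{h,c(q_i)\}$; for the same reason $q_1,q_2$ can never be colored thereafter. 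By (1) there is at most one poison spoke. (3) Conversely, in a \emph{terminal} position with $u$ colored, every uncolored spoke $v$ is a cycle-neighbor of a poison spoke. Indeed, if not, then no cycle-neighbor of $v$ is colored $h$ (such a neighbor would be a poison spoke adjacent to $v$); since $v$ is uncolored, at most $n-1$ vertices are colored, so some color $x$ is used on no vertex; and coloring $v$ with $x$ creates only edges whose labels involve $x$, which are therefore new, and pairwise distinct because the other endpoints ($u$ and the colored cycle-neighbors of $v$) carry pairwise distinct colors (by (1), together with $h\neq$ those colors) — so $v$ is markable, contradicting terminality. Combining (1)–(3): a terminal position with $u$ colored either has all $n$ vertices colored, or contains exactly one poison spoke together with its two uncolored cycle-neighbors, everything else colored — that is, $n-2$ colored vertices.

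To finish, I would reassemble the strategies using the parity of $\{n,n-2\}$ exactly as sketched in the first paragraph, and dispose of two small points: that $n\ge 4$ makes $C_{n-1}$ a genuine cycle, so a poison spoke really has two distinct cycle-neighbors and a spoke's two cycle-neighbors are never forced to coincide (for $n=4$ one has $W_4=K_4$, a consistency check against Theorem~\ref{thm:complete}); and that the one special line in the even case — Player 1 opens with a spoke colored $c$ and Player 2 answers by coloring $u$ with the same color $c$ — simply turns that spoke into the poison spoke, landing directly in the $n-2$ case, which is still even.

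The main obstacle is step (3): ruling out terminal positions with $n-1,\,n-3,\dots$ colored vertices, i.e. showing that an uncolored spoke not adjacent to a poison spoke is always extendable. This is the one genuinely case-sensitive part, and it rests on the pigeonhole observation that whenever an uncolored spoke is present there is a color used nowhere on the board, whose placement on that spoke cannot repeat any existing edge label. Steps (1) and (2), the legality of the opening hub move for the relevant player, and the parity bookkeeping are all routine.
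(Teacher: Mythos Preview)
Your proof is correct, and it shares the paper's opening idea (the winning player colors the hub as early as possible), but the analysis thereafter is organized differently. The paper's proof is move-by-move: in the odd case, Player~1 colors the hub, and then on move~3 deliberately creates a ``poison spoke'' (playing the hub color on a spoke non-adjacent to Player~2's move), after which it argues directly that no further vertices can become unmarkable; in the even case, Player~2 plays the \emph{same} color as Player~1 on the hub (or on a spoke if Player~1 took the hub), immediately creating the poison spoke and its two unmarkable neighbors, and again argues the remainder is forced. Your route replaces all of this with the structural lemma that every terminal position with the hub colored has exactly $n$ or $n-2$ colored vertices, from which the parity conclusion follows with no further strategy needed. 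Step~(3) is the substantive part, and your pigeonhole argument (an unused color can always be placed on an uncolored spoke not adjacent to a poison spoke) is clean and correct; it in fact subsumes the paper's repeated assertions that ``no other vertices can be made unmarkable.'' What your approach buys is robustness: you never have to specify the winning player's second or third move, nor the color Player~2 places on the hub in the even case. What the paper's approach buys is concreteness: an explicit optimal line of play rather than an existence statement. Both are valid; yours is arguably the tidier argument.
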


\begin{proof}
    By the previous lemma, $\lambda(W_n) = n$. 
    \item\underline{Case 1:} \emph{$n$ is odd.} If $n$ is odd, Player 1 has the winning strategy by playing $1$ in the middle vertex. If Player 2 then plays $1$ on one of the outside vertices, this move makes both vertices adjacent to Player 2's move unmarkable. No other vertices can be made unmarkable because all remaining vertices are adjacent to the $1$ in the middle, so each color can be played only once. Since there are an odd number of vertices remaining,  Player 1 wins. 
    Alternatively, if Player 2 plays a different color, say $2$, on an outside vertex, Player 1 wins by playing $1$ on a non-adjacent outside vertex, of which there is at least one, since $n\ge 5$. This move again renders the two vertices adjacent to the spoke vertex colored 1 unmarkable,and leaves an even number of markable vertices on Player 2's turn. As before, no other vertices can be made unmarkable from this configuration. Therefore, Player 1 wins if $n$ is odd.
    \item\underline{Case 2:} \emph{$n$ is even.} If $n$ is even, Player 2 has the winning strategy by playing the same color as Player 1, either on the hub vertex if Player 1 played on a spoke, or on a spoke vertex if Player 1 played on the hub. This once again makes the two vertices adjacent to the two colored vertices unmarkable. This leaves only an even number of vertices, none of which can be made unmarkable since all the remaining vertices are adjacent to the $1$ on the hub. Since Player 1 is left with an even number of vertices, Player 2 is in the winning position. Therefore, if $n$ is even, Player 2 has the winning strategy.
\end{proof}

Now consider triangular book graphs, defined as the join of an edge and the empty graph, i.e. $B_n := K_2\nabla \overline{K_{n-2}}$.

\begin{lemma}\label{thm:book-edcn}
    For a (triangular) book graph of $n$ vertices, $B_n$, $\lambda(B_n)=n$.
\end{lemma}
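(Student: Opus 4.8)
The plan is to prove $\lambda(B_n)\le n$ and $\lambda(B_n)\ge n$ separately; the first direction is trivial and essentially all the content lives in the second. For the upper bound I would simply color the $n$ vertices of $B_n$ with the $n$ distinct colors $1,\dots,n$: since no two vertices agree, the induced edge-coloring sends every edge to a distinct $2$-subset of $[n]$ and is therefore edge-distinguishing. So the real task is to rule out an edge-distinguishing $k$-coloring with $k\le n-1$.

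Assume toward a contradiction that $c$ is an edge-distinguishing $k$-coloring with $k\le n-1$ (I would also assume $n\ge 3$, since $B_2=K_2$ is degenerate). Write $u,v$ for the two spine vertices forming the $K_2$, so that $uv\in E(B_n)$, and $w_1,\dots,w_{n-2}$ for the page vertices of $\overline{K_{n-2}}$, each adjacent to both $u$ and $v$; set $a=c(u)$, $b=c(v)$, and $x_i=c(w_i)$. If $a=b$, then for any page vertex $w_i$ the edges $uw_i$ and $vw_i$ both receive the color $\{a,x_i\}$, contradicting injectivity of $c'$; so $a\ne b$. In that case I would extract two facts. First, no $x_i$ equals $a$ or $b$: if $x_i=b$ then $uw_i$ is colored $\{a,b\}$, and if $x_i=a$ then $vw_i$ is colored $\{a,b\}$, each colliding with the spine edge $uv$. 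Second, $x_1,\dots,x_{n-2}$ are pairwise distinct, since $x_i=x_j$ with $i\ne j$ would make $uw_i$ and $uw_j$ share the color $\{a,x_i\}$. Hence $\{x_1,\dots,x_{n-2}\}$ consists of $n-2$ distinct colors avoiding $a$ and $b$, so at least $(n-2)+2=n$ colors are used, contradicting $k\le n-1$. Therefore $\lambda(B_n)\ge n$, and with the upper bound $\lambda(B_n)=n$.

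I do not expect a genuine obstacle here; the only points requiring care are the multiset convention --- colors of the form $\{a,a\}$ are permitted, so the case checks on whether $x_i\in\{a,b\}$ must be carried out directly rather than by assuming $a$, $b$, $x_i$ are all distinct --- and the role of the hypothesis $n\ge 3$, which is exactly what makes the $a=b$ case produce a repeated edge color. Equivalently, one can package the two constraints by observing that injectivity of $c'$ on the edges incident to $u$ alone forces $b,x_1,\dots,x_{n-2}$ to be $n-1$ distinct colors, while injectivity on the edges incident to $v$ additionally forbids $a$ from the list $x_1,\dots,x_{n-2}$, and that is precisely what lifts the count from $n-1$ to $n$.
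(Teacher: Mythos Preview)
Your proof is correct and follows the same approach as the paper, namely showing that any edge-distinguishing coloring must assign all $n$ vertices distinct colors. In fact you are more careful than the paper: the paper's proof invokes $\lambda(K_3)=3$ on each triangle, which forces the spine colors to differ and each page color to differ from both spine colors, but leaves implicit why two page vertices cannot share a color---a point you handle explicitly via the duplicated edge color $\{a,x_i\}$ on $uw_i$ and $uw_j$.
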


\begin{proof}
    Because all $K_3$ subgraphs of a triangular book graph $B_n$ share two common vertices and the EDCN of $K_3$ is $3$, it follows that a unique color is required for every vertex of $B_n$. Therefore, $\lambda(B_n) = n$.
\end{proof}

\begin{thm}\label{thm:book}
    For the (triangular) book graph on $n$ vertices, $B_n$, for $n\ge 3$, Player 2 has the winning strategy.
\end{thm}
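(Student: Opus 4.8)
The plan is to hand Player 2 an explicit strategy, dividing into cases on the parity of $n$ and on Player 1's opening move. Write $a,b$ for the two spine vertices (the copy of $K_2$) and $p_1,\dots,p_{n-2}$ for the page vertices; by Lemma~\ref{thm:book-edcn} the game uses $n$ colors. I would first isolate two structural facts. \emph{Fact 1}: while both $a$ and $b$ are uncolored, the pages induce no edges, so every move on a page is legal no matter what colors have already been used. \emph{Fact 2}: as soon as a vertex $v$ has two neighbors of the same color, $v$ is permanently unmarkable, since coloring $v$ would induce the edge color $\{c,\cdot\}$ twice. In particular, if $a$ and $b$ share a color then every page is dead, and if two pages share a color then both $a$ and $b$ are dead.

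For $n$ even, Player 2 mirrors Player 1's first move with a repeated color. If Player 1 colors a spine vertex with $c$, Player 2 colors the other spine vertex with $c$; by Fact 2 all pages become unmarkable and the game ends after $2$ moves, so Player 2 plays last. If Player 1 colors a page with $c$, Player 2 colors a second page with $c$ (legal by Fact 1, and possible as $n\ge 4$); by Fact 2 both $a$ and $b$ are permanently dead, and the remaining $n-4$ pages stay markable by Fact 1 since their only neighbors are $a,b$. The game then runs exactly $2+(n-4)=n-2$ moves, an even number.

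For $n$ odd the page-mirroring idea gives $n-2$ moves, which is odd, so Player 2 instead freezes exactly one spine vertex. Against a spine opening Player 2 responds as above (game over in $2$ moves). Against a page opening $p_1\mapsto c$, Player 2 colors $a$ with $c$. Then $b$, being adjacent to the two color-$c$ vertices $a$ and $p_1$, is permanently dead, while each surviving page $p_i$ ($i\ge2$) has $a$ as its unique colored neighbor, so coloring $p_i$ with $d$ is legal precisely when $d\ne c$ and $\{c,d\}$ is not yet used; since $n-1$ colors other than $c$ are available and only $n-3$ such pages remain, all of them can and must eventually be colored. The game therefore runs exactly $2+(n-3)=n-1$ moves, an even number. (For $n=3$ this degenerates to $B_3=K_3$, consistent with Theorem~\ref{thm:complete}.)

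In every case I would close by verifying that Player 1 has no disruptive response: once a spine vertex is frozen by a repeated color it cannot be legally colored, and a page can never become unmarkable while a spine vertex is uncolored, so the move totals above hold regardless of Player 1's color choices. The main obstacle is not any one computation but choosing the case split correctly — realizing that the parity of $n$ dictates whether Player 2 should kill both spine vertices or exactly one — and then, in the odd case, confirming that the residual page-coloring phase is genuinely forced to consume all $n-3$ remaining pages, with no page eliminated early and no extra vertex opened up.
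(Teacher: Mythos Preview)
Your proof is correct and follows essentially the same strategy as the paper's: in both, Player 2 responds to a spine opening by duplicating the color on the other spine vertex, to a page opening with $n$ even by duplicating the color on another page (killing both spine vertices), and to a page opening with $n$ odd by duplicating the color on a spine vertex (killing exactly the other spine vertex). Your case split is organized parity-first rather than opening-first, and your verification that the residual pages are all forced to be colored is more explicit, but the content is the same.
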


\begin{proof} 
    Assume without loss of generality that Player 1's first move is a $1$. If Player 1 plays on the spine, Player 2 will win by playing $1$ on the other vertex of the spine. Doing so makes all other vertices unmarkable, thus causing Player 2 to win the game. If Player 1 does not play on the spine, Player 2's winning strategy has two cases depending on if $n$ is even or odd.
    \item\underline{Case 1:} \emph{$n$ is odd.} If $n$ is odd, Player 2 will win by playing the same color, $1$, on one of the vertices on the spine (see Figure \ref{fig:BookgraphCase1}). By doing so, this makes the other spine vertex unmarkable. Since three vertices have been played on or eliminated in two moves, this means Player 1 will have an even number of markable vertices remaining. None of the remaining vertices can be made unmarkable, since they are all adjacent to a vertex colored 1, so each color can only be used once. Player 2 will win by being the last one to play.
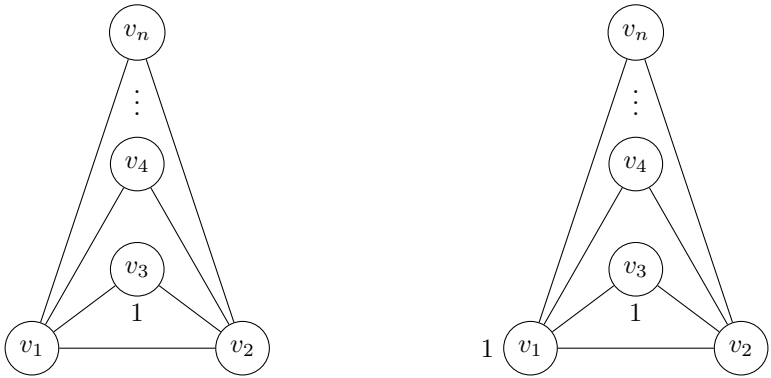
\begin{figure}[H]
\centering
    \begin{minipage}{0.4\textwidth}
    \begin{tikzpicture}[scale=0.7]
    \node[draw,circle] (v1) {$v_1$};
    
    \path (v1) ++(0:4) node [draw,circle] (v2) {$v_2$};

    \path (v1) ++(0:2) node [] (u) {};

    \path (u) ++(90:1.5) node [draw,circle] (v3) {$v_3$};
    \node at ([shift={(270:0.3)}]v3.270) {1}; 

    \path (u) ++(90:3.5) node [draw,circle] (v4) {$v_4$};
    \node at ([shift={(90:0.8)}]v4.90) {$\vdots$}; 

    \path (u) ++(90:6) node [draw,circle] (v5) {$v_n$};

    \draw (v1) -- (v2) -- (v3) -- (v1);
    \draw (v1) -- (v4) -- (v2);
    \draw (v1) -- (v5) -- (v2);
    \end{tikzpicture}
    \end{minipage}
    \begin{minipage}{0.3\textwidth}
    \begin{tikzpicture}[scale=0.7]
    \node[draw,circle] (v1) {$v_1$};
    \node at ([shift={(180:0.3)}]v1.180) {1}; 
    
    \path (v1) ++(0:4) node [draw,circle] (v2) {$v_2$};

    \path (v1) ++(0:2) node [] (u) {};

    \path (u) ++(90:1.5) node [draw,circle] (v3) {$v_3$};
    \node at ([shift={(270:0.3)}]v3.270) {1}; 

    \path (u) ++(90:3.5) node [draw,circle] (v4) {$v_4$};
    \node at ([shift={(90:0.8)}]v4.90) {$\vdots$}; 

    \path (u) ++(90:6) node [draw,circle] (v5) {$v_n$};

    \draw (v1) -- (v2) -- (v3) -- (v1);
    \draw (v1) -- (v4) -- (v2);
    \draw (v1) -- (v5) -- (v2);
    \end{tikzpicture}
    \end{minipage}

\vspace{0.5cm}
\caption{$B_n$, Case 1, where Player 2 is in a winning position.}
\label{fig:BookgraphCase1}
\end{figure}
    \item\underline{Case 2:} \emph{$n$ is even.} If $n$ is even, Player 2 has the winning strategy by playing the same color, $1$ on another vertex not on the spine (see Figure \ref{fig:BookgraphCase2}). By doing so, the two vertices on the spine are made unmarkable. None of the remaining vertices can be made unmarkable because there are no edges between them, so no edge can be assigned a color. Since there are an even number of vertices left to start Player 1's turn, Player 2 will win by being the last to play.
\end{proof}
\begin{figure}[H]
\centering
\begin{minipage}{0.4\textwidth}
    \begin{tikzpicture}[scale=0.7]
    \node[draw,circle] (v1) {$v_1$};
    
    \path (v1) ++(0:4) node [draw,circle] (v2) {$v_2$};

    \path (v1) ++(0:2) node [] (u) {};
    \node at ([shift={(90:0.5)}]u.90) {1}; 

    \path (u) ++(90:1.5) node [draw,circle] (v3) {$v_3$};
    \node at ([shift={(90:0.8)}]v3.90) {$\vdots$}; 

    \path (u) ++(90:4) node [draw,circle] (v5) {$v_n$};

    \draw (v1) -- (v2) -- (v3) -- (v1);
    \draw (v1) -- (v5) -- (v2);
    \end{tikzpicture}
    \end{minipage}
    \begin{minipage}{0.3\textwidth}
    \begin{tikzpicture}[scale=0.7]
    \node[draw,circle] (v1) {$v_1$};
    
    \path (v1) ++(0:4) node [draw,circle] (v2) {$v_2$};

    \path (v1) ++(0:2) node [] (u) {};

    \path (u) ++(90:1.5) node [draw,circle] (v3) {$v_3$};
    \node at ([shift={(270:0.3)}]v3.270) {1}; 
    \node at ([shift={(90:0.8)}]v3.90) {$\vdots$}; 

    \path (u) ++(90:4) node [draw,circle] (v5) {$v_n$};
    \node at ([shift={(90:0.5)}]v5.90) {1}; 

    \draw (v1) -- (v2) -- (v3) -- (v1);
    \draw (v1) -- (v5) -- (v2);
    \end{tikzpicture}
    \end{minipage}
    \vspace{0.5cm}
    
\caption{$B_n$, Case 2, where Player 2 is in a winning position.}
\label{fig:BookgraphCase2}
\end{figure}
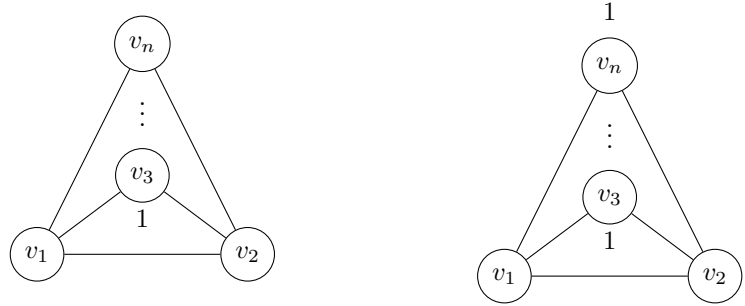

\subsection{Paths}\label{subsec:paths}

\noindent In this section, we investigate winning strategies on paths. We show that Player 1 has the winning strategy for $P_n$ for $3\le n\le 6$. Note that we assume in this section that the vertices on each path $P_n$ are labeled from left to right as indicated in Figure \ref{fig:path-label}.

\begin{figure}[H]
    \centering

    \begin{tikzpicture}[scale=0.8,every node/.style={draw=black,circle}]
    \node (v1) at (0,0) {$v_1$};
    \node (v2) at (2,0) {$v_2$};
    \node[draw=none] (dots) at (4,0) {$\cdots$};
    \node (vn) at (6,0) {$v_n$};
    \draw[-] (v1) to (v2) to (dots) to (vn);
    \end{tikzpicture}

    \caption{Vertex labeling for paths $P_n$.}
    \label{fig:path-label}
\end{figure}
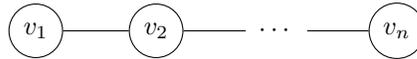

\begin{thm}\label{thm:P3}
    For $P_3$, Player 1 has the winning strategy.
\end{thm}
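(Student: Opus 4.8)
The plan is to exhibit an explicit winning strategy for Player 1 and verify it by a short exhaustive case check, which is feasible because the game on $P_3$ lasts at most three moves. First I would record that $\lambda(P_3) = 2$: a single color forces both edges to receive the multiset $\{1,1\}$, while the coloring $v_1 = v_2 = 1$, $v_3 = 2$ induces the distinct edge colors $\{1,1\}$ and $\{1,2\}$. So the game on $P_3$ is played with the color list $[2] = \{1,2\}$, and every game state is determined by which of the (at most three) vertices are colored and with what.

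Next I would specify Player 1's strategy: open by coloring the central vertex $v_2$, say with color $1$ (the choice of color is immaterial by the symmetry permuting the colors). After this move no edge is yet induced, so Player 2 must color one of the two leaves; by the left-right reflection symmetry of $P_3$ we may assume Player 2 colors $v_1$, with either color $1$ or color $2$. In the first subcase the induced edge $\{v_1,v_2\}$ is $\{1,1\}$, and Player 1 can legally set $v_3 = 2$, inducing $\{v_2,v_3\} = \{1,2\} \ne \{1,1\}$; in the second subcase $\{v_1,v_2\} = \{1,2\}$, and Player 1 can legally set $v_3 = 1$, inducing $\{v_2,v_3\} = \{1,1\} \ne \{1,2\}$. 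Either way Player 1 fills the last vertex, the board is completely colored, and Player 1 has made the final legal move, hence wins.

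The only real content of the argument is the choice of the opening move, so I would include a sentence explaining why opening on a leaf fails, to motivate playing the center: if Player 1 starts with $v_1 = 1$, then Player 2 responds with $v_3 = 1$; since $v_1$ and $v_3$ are non-adjacent no edge is induced, but now $v_2$ is unmarkable (coloring it $1$ produces two $\{1,1\}$ edges, coloring it $2$ produces two $\{1,2\}$ edges), so the board terminates after just two moves and Player 2 wins. Thus there is no genuine obstacle beyond checking that the case split over Player 2's replies is exhaustive after reducing by the color-permutation and reflection symmetries; the whole argument is elementary.
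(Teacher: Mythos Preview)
Your proof is correct and follows essentially the same approach as the paper: open on the center vertex $v_2$ with color $1$, then respond to Player~2's leaf move with the other color on the remaining leaf. You include some extra detail the paper omits (an explicit justification of $\lambda(P_3)=2$ and the failure of the leaf opening), but the strategy and the case analysis are identical.
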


\begin{proof}
     Players have the ability to play two colors on this graph, as $\lambda(P_3) = 2$ \cite{frank1982edcn}. Player 1 plays $1$ on $v_2$. Player 2 can play $1$ or $2$ on either of the two remaining vertices. Then Player 1 plays the other color on the other vertex to win.
\end{proof}

\begin{thm}\label{thm:P4}
    Player 1 has the winning strategy on $P_4$.
\end{thm}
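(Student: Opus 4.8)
The plan is to give Player~1 an explicit winning strategy, in the spirit of the proof of Theorem~\ref{thm:P3}. I would begin by recording that $\lambda(P_4)=2$ (this follows from \cite{frank1982edcn}; directly, the labeling $1,1,2,2$ on $v_1,v_2,v_3,v_4$ induces the three distinct edge colors $\{1,1\},\{1,2\},\{2,2\}$, while a single color obviously cannot distinguish three edges), so the game is played on four vertices with the two colors $1,2$. A fully colored board is reached only after four moves, the last of which is Player~2's; hence Player~1 wins precisely by forcing the game to terminate after three moves, i.e.\ after Player~1's second move exactly one vertex remains uncolored and it is unmarkable. I would also isolate the two ways a vertex can become unmarkable here: an interior vertex ($v_2$ or $v_3$) is unmarkable as soon as its two neighbors receive a common color, since then its two incident edges would carry the same multiset; and an endpoint whose unique neighbor $u$ is colored is unmarkable once both edge colors $\{c(u),1\}$ and $\{c(u),2\}$ have already appeared on the board.

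I would then fix Player~1's opening move: color~$1$ on the endpoint $v_1$ (legitimate up to the symmetry of $P_4$ swapping $v_1\leftrightarrow v_4$, $v_2\leftrightarrow v_3$, and up to interchanging the two yet-unused colors). It is worth noting explicitly that the tempting central opening of color~$1$ on $v_2$ instead \emph{loses}: Player~2 replies with color~$2$ on $v_3$, after which Player~1's only legal moves are color~$1$ on $v_1$ or color~$2$ on $v_4$, and either one leaves a markable vertex that Player~2 colors to complete the board. Thus the endpoint choice is essential.

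The core of the argument is a short case analysis over Player~2's reply. Since coloring $v_1$ destroys the only nontrivial automorphism of $P_4$, this genuinely splits into the six choices of an uncolored vertex together with a color, and in each I would name Player~1's response and check that it is legal and that afterward exactly one vertex remains uncolored and is unmarkable. The responses are: if Player~2 plays color~$c$ on $v_4$, Player~1 plays color~$c$ on $v_2$, so that $v_3$ has both neighbors colored~$c$; if Player~2 plays color~$1$ on $v_2$, Player~1 plays color~$1$ on $v_4$, so that $v_3$ has both neighbors colored~$1$; if Player~2 plays color~$2$ on $v_2$ or on $v_3$, Player~1 plays color~$2$ on the other of these two vertices, so that the edges $\{1,2\}$ and $\{2,2\}$ are both used and the endpoint $v_4$ (neighbor colored~$2$) is unmarkable; and if Player~2 plays color~$1$ on $v_3$, then $v_2$ is \emph{already} unmarkable and Player~1 simply colors the unique remaining markable vertex $v_4$, ending the game. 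In every branch Player~1 has made the third and last legal move, so Player~1 wins.

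The only real obstacle is the bookkeeping: in each of the six branches one must verify that Player~1's third move does not itself repeat an edge color and that no vertex other than the intended one survives as markable. This is routine but cannot be shortened by symmetry, because Player~1's opening breaks all symmetry of the board; I would present it as a compact table listing, for each reply of Player~2, Player~1's response and the resulting unmarkable vertex.
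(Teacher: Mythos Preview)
Your proof is correct and follows essentially the same approach as the paper: open with color~$1$ on the endpoint $v_1$, then case-split on Player~2's reply to force a third-move terminal position with one unmarkable vertex remaining. Your specific replies differ from the paper's in a couple of subcases (e.g., when Player~2 plays $2$ on $v_2$ you respond on $v_3$ whereas the paper responds on $v_4$), but both choices work, and your added remark that the central opening on $v_2$ loses is a nice bonus not in the paper.
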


\begin{proof}
     Players have the ability to play two colors on this graph, as $\lambda(P_4) = 2$ \cite{frank1982edcn}. Player 1 begins by playing $1$ on $v_1$. If Player 2 plays on $v_2$ or $v_4$, then Player 1 plays the same color as Player 2 on the other one, making $v_3$ unmarkable and winning the game. If Player 2 plays a 1 or a 2 on $v_3$, then Player 1 colors $v_4$, leaving $v_2$ unmarkable, either for having two adjacent 1 vertices in the former case, or for having no viable color options in the latter.
     
\begin{figure}[H]
\centering

\begin{minipage}{0.5\textwidth}
    \begin{tikzpicture}[scale=0.7]
    \node[draw,circle] (v1) {$v_1$};
    \node at ([shift={(90:0.4)}]v1.90) {1}; 
    
    \path (v1) ++(0:2) node [draw,circle] (v2) {$v_2$};
    \node at ([shift={(90:0.4)}]v2.90) {$i$}; 

    \path (v2) ++(0:2) node [draw,circle] (v3) {$v_3$};
    
    \path (v3) ++(0:2) node [draw,circle] (v4) {$v_4$};
    \node at ([shift={(90:0.4)}]v4.90) {$i$}; 

    \draw (v1) -- (v2) -- (v3) -- (v4);
    \end{tikzpicture}
    \end{minipage}
    \begin{minipage}{0.3\textwidth}
    \begin{tikzpicture}[scale=0.7]
    \node[draw,circle] (v1) {$v_1$};
    \node at ([shift={(90:0.4)}]v1.90) {1}; 
    
    \path (v1) ++(0:2) node [draw,circle] (v2) {$v_2$};

    \path (v2) ++(0:2) node [draw,circle] (v3) {$v_3$};
    \node at ([shift={(90:0.4)}]v3.90) {$i$};
    
    \path (v3) ++(0:2) node [draw,circle] (v4) {$v_4$};
    \node at ([shift={(90:0.4)}]v4.90) {1}; 

    \draw (v1) -- (v2) -- (v3) -- (v4);
    \end{tikzpicture}
    \end{minipage}
    \vspace{0.5cm}
    
\caption{The two different end states for $P_4$, where $i\in\{1,2\}$.}
\label{fig:P_4game}
\end{figure}
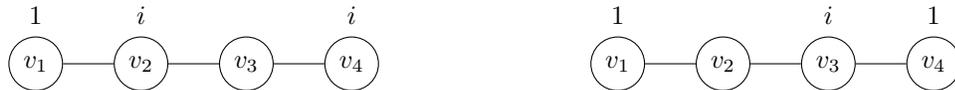
\end{proof}

\begin{thm}\label{thm:P5}
    Player 1 has the winning strategy on $P_5$.
\end{thm}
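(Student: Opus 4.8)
The plan is to exhibit an explicit winning strategy for Player 1 and verify it with a short finite case check. Recall from \cite{frank1982edcn} that $\lambda(P_5)=3$, so the game is played with color set $\{1,2,3\}$. The whole argument rests on a \emph{parity principle}: each move colors exactly one new vertex, so if the game terminates after an odd number of moves then Player 1 made the last move and wins; in particular, if the game ever fills all five vertices, Player 1 wins. Player 1's strategy will be designed to force one of two outcomes — either the board becomes fully colored (five moves), or the board terminates after exactly three moves.

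First, Player 1 opens by playing color $1$ on the central vertex $v_3$; this is legal, since both neighbors of $v_3$ are uncolored. Using the reversal symmetry $v_i\leftrightarrow v_{6-i}$ (which fixes $v_3$) together with the interchangeability of the two as-yet-unused colors $2$ and $3$, Player 2's reply falls into just four cases up to isomorphism: color $1$ on $v_1$; a new color, say $2$, on $v_1$; color $1$ on $v_2$; or a new color, say $2$, on $v_2$. Player 1's prescribed responses are, respectively, color $1$ on $v_5$, color $2$ on $v_5$, color $2$ on $v_4$, and color $3$ on $v_4$. In the first case the resulting coloring has $v_1=v_3=v_5=1$, so both $v_2$ and $v_4$ are unmarkable (each lies between two neighbors colored $1$); the board terminates after three moves and Player 1 wins.

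In each of the other three cases I would argue that the game necessarily fills all five vertices. After Player 1's second move exactly two vertices remain uncolored, and I would check three things, each reducing to a tiny computation with $2$-subsets of $\{1,2,3\}$: (a) both remaining vertices are currently markable; (b) after any legal Player 2 move the last uncolored vertex is still markable, so Player 1 completes the board on move five; and (c) Player 2 cannot pick a color that renders the last uncolored vertex unmarkable and thereby stop the game after four moves. Part (c) is the main obstacle: the threat is that Player 2 colors a vertex so that the remaining internal vertex ends up between two equally colored neighbors, or so that all three of its candidate colors reproduce an already-used edge color. Player 1's second move is chosen precisely to forestall this — for instance, in the last case Player 1 plays color $3$ on $v_4$ (rather than $1$ or $2$), so that the edge colors $\{2,2\}$ and $\{3,3\}$ remain available as "escapes" for the final vertex no matter what Player 2 does. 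In the second case one can additionally use the reversal symmetry of the board after Player 1's move to halve the work. Carrying out checks (a)--(c) across the handful of sub-cases completes the proof.
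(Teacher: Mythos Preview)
Your proposal is correct and follows essentially the same approach as the paper: open at the center $v_3$, then mirror Player~2's reply onto $v_{6-i}$, and verify by a short case check that the board either terminates after three moves or is forced to completion. The only cosmetic difference is in your fourth case (Player~2 plays $2$ on $v_2$), where you respond with $3$ on $v_4$ while the paper responds with $1$ on $v_4$; both choices work and the remaining verification is the same in spirit.
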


\begin{proof}
     Players have the ability to play 3 colors on this graph, as $\lambda(P_5) = 3$ \cite{frank1982edcn}. Player 1's first move is to play $1$ on $v_3$. From there, if Player 2 plays on $v_i$, Player 1's optimal move would be to play on $v_{6-i}$. This leads to 2 unique cases.
     \item \underline{Case 1:} \emph{Player 2 plays on $v_1$ or $v_5$.} The strategy described above indicates that if Player 2 plays on $v_5$, Player 1 should play on $v_1$ and vice versa. In this case Player 1 should always play the same color that Player 2 played. In the case where Player 2 played a 1, by playing another 1, Player 1 makes every other vertex unmarkable and thus wins the game. If Player 2 played another color, say 2, by playing their next move following the strategy Player 1 creates 2 vertices adjacent to both a 1 and a 2. This means that 1, 2, and 3 can be played on either vertex. Therefore, in order to win, Player 1 plays $3$ if Player 2 did not, and $1$ if Player 2 did play a $3$. This will complete the board and thus Player 1 wins.
     \item \underline{Case 2:} \emph{Player 2 plays on $v_2$ or $v_4$.} The strategy above indicates that if Player 2 plays on $v_2$, then Player 1 should play on $v_4$ and vice versa. Player 1 should play a 1 if Player 2 did not, and a 2 if Player 2 did, resulting in a configuration isomorphic to Figure \ref{fig:P_5games1}. By doing so, this will remove the ability for Player 2 to make any vertex unmarkable. Therefore, Player 1 will win by being the last to play by coloring $v_1$ or $v_5$ with 3, whichever is left.
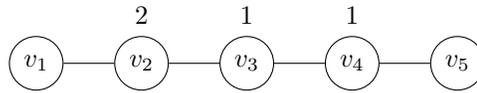
\begin{figure}[H]
\centering

\begin{tikzpicture}[scale=0.7]
    \node[draw,circle] (v1) {$v_1$};
    
    \path (v1) ++(0:2) node [draw,circle] (v2) {$v_2$};
    \node at ([shift={(90:0.4)}]v2.90) {2}; 

    \path (v2) ++(0:2) node [draw,circle] (v3) {$v_3$};
    \node at ([shift={(90:0.4)}]v3.90) {1};
    
    \path (v3) ++(0:2) node [draw,circle] (v4) {$v_4$};
    \node at ([shift={(90:0.4)}]v4.90) {1}; 
    
    \path (v4) ++(0:2) node [draw,circle] (v5) {$v_5$};

    \draw (v1) -- (v2) -- (v3) -- (v4) -- (v5);
    \end{tikzpicture}
\caption{$P_5$, Case 2, where Player 1 is in a winning position.}
\label{fig:P_5games1}
\end{figure}
\end{proof}

\begin{thm}\label{thm:P6}
    Player 1 has the winning strategy on $P_6$.
\end{thm}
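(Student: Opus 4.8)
The plan is to recast the question in terms of parity and then exhibit an explicit first-player strategy, verified by a finite case analysis. First note that $\lambda(P_6)=3$: two colours yield only $\binom{3}{2}=3$ possible (multiset) edge-colours, too few for the five edges of $P_6$, while the $3$-colouring $(1,1,2,3,3,1)$ of $(v_1,\dots,v_6)$ is edge-distinguishing. Since $P_6$ has an even number of vertices, a completely coloured board is Player 2's sixth move, so Player 1 wins precisely when the game halts after an odd number of moves; one checks easily that on $P_6$ the game cannot halt after $1$ or $3$ moves (with $3$ coloured vertices there are at most two induced edges, not enough to box a vertex, and one cannot flank three vertices at once on a path). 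Hence Player 1's goal is to reach a terminating board with five vertices coloured and the sixth one unmarkable --- equivalently, to force exactly one vertex to become unmarkable before being coloured --- while Player 2 wants either a full colouring or a board with four coloured vertices and the remaining two unmarkable.

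The strategy I propose for Player 1 is: first play colour $1$ on $v_2$ (by the symmetry $v_i\mapsto v_{7-i}$, $v_5$ works just as well), and then reply to Player 2's move by a short dictionary. By the ``first unused colour'' convention, Player 2's reply is colour $1$ or $2$ on one of $v_1,v_3,v_4,v_5,v_6$, giving ten cases. The recurring device is a \emph{flank-kill}: when Player 2's move leaves two coloured vertices at distance two sharing a colour that Player 1 may still play, Player 1 places that colour on the appropriate vertex so the vertex between them acquires two equally-coloured neighbours and becomes unmarkable. This deletes exactly one vertex, and one checks the two remaining uncoloured vertices are then ``safe'' (each an endpoint, or with two coloured neighbours of different colours), so neither can later be killed and both get coloured --- five moves total, so Player 1 makes the last one. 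For example, if Player 2 plays $2$ on $v_4$, Player 1 answers $2$ on $v_6$, killing $v_5$; if Player 2 plays $1$ on $v_3$, Player 1 answers $1$ on $v_5$, killing $v_4$. In the remaining cases --- Player 2 touches the endpoint $v_1$, or plays $1$ on $v_4$ (which already kills $v_3$) --- no immediate safe flank-kill exists, and Player 1 instead answers $1$ on $v_5$: this either defuses the one remaining flank-threat (by colouring the would-be sandwiched vertex itself) or burns the edge-colour $\{1,1\}$ from the far side, leaving a position in which, after any reply of Player 2, Player 1's next move induces the third $\{1,i\}$ edge and thereby boxes in the last uncoloured interior vertex. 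Once more the game halts after exactly five moves.

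The bulk of the argument is then routine but lengthy bookkeeping: for each of the ten replies of Player 2, and each legal continuation, one checks (i) that Player 1's prescribed moves are legal --- easy, since only a handful of edges are ever induced before the endgame --- and (ii) that the resulting position is terminal with five vertices coloured, or else that Player 1 still has a legal move while every option of Player 2 leads to such a position. Point (ii) in the ``no safe flank-kill'' cases is the main obstacle: Player 2's sole defensive resource is to perform a flank-kill of their own (typically matching Player 1's colour on a vertex two away), which would create a \emph{second} unmarkable vertex and restore even parity. The reason the opening $1$ on $v_2$ is chosen is exactly that it denies Player 2 this second kill in every line, and confirming this across all branches is the delicate step --- which is also why, unlike $P_3,P_4,P_5$, the $P_6$ argument is appreciably longer, and why the analogous claim with one extra colour (Theorem~\ref{thm:numcolors}) reverses, since an extra colour relaxes precisely the ``colour-boxing'' that makes Player 1's forcing moves work.
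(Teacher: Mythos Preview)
Your opening move (colour $1$ on $v_2$) and your parity framing both match the paper, and your observation that the game cannot terminate after one or three moves is a clean addition the paper omits. The second-move strategy, however, diverges: you propose a case-dependent ``flank-kill'' (match Player~2's colour at distance two, sandwiching one vertex), whereas the paper uses a single uniform reply --- Player~1 plays $1$ on $v_5$ whenever Player~2 has not played there, and plays $1$ on $v_4$ in the one sub-case where Player~2 has placed a non-$1$ colour on $v_5$. The payoff of this uniformity is that after Player~1's second move every uncoloured vertex is adjacent to a $1$; since only the three edge-colours $\{1,1\},\{1,2\},\{1,3\}$ contain $1$ and one of them is already used, exactly two further legal moves remain, and parity finishes the argument with essentially no branch-by-branch bookkeeping.

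Your approach can be made to work, but as written the case enumeration has a hole. If Player~2 plays $1$ on $v_6$, the only flank-kill available is $1$ on $v_4$, and that kills \emph{two} vertices ($v_3$ and $v_5$ simultaneously), handing Player~2 the parity; yet this case appears neither among your flank-kill examples nor on your ``remaining cases'' list (where you name only the endpoint $v_1$ and $1$ on $v_4$). Your fallback reply $1$ on $v_5$ does in fact handle it, but you have not said so, and your description of when a flank-kill is ``safe'' is not sharp enough to exclude this line on its own. More broadly, since you explicitly defer the verification of all ten branches to ``routine but lengthy bookkeeping'', what you have written is a plausible sketch rather than a proof; the paper's uniform response is precisely the device that collapses that bookkeeping into a two-line counting argument.
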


\begin{proof}
     Players choose from 3 colors to play, as $\lambda(P_6) = 3$ \cite{frank1982edcn}. Player 1 plays $1$ on vertex $v_2$. The strategy for Player 1 for subsequent moves has two cases. 
     \item \underline{Case 1:} \emph{Player 2 plays somewhere other than $v_5$.} In this case, Player 1 is in the winning position by playing $1$ on $v_5$. Now every other vertex will be adjacent to a $1$. There are only three possible edge colorings containing $1$, namely $\{1, 1\}, \{1, 2\}$, and $\{1, 3\}$. The first move by Player 2 creates one of those edges, so Player 2's second move must use a different color, leaving one unused color. 
     Thus, Player 1 will win on the third move, with one unmarkable vertex remaining.
     
     \item \underline{Case 2:} \emph{Player 2 plays on $v_5$.} Note that if Player 2 plays a 1 on $v_5$, after Player 1's next turn, the game plays out identically to Case 1. If Player 2 does not play a 1, Player 1's strategy would then be to create an unmarkable vertex at $v_3$ by playing $1$ on $v_4$. This move will leave only two markable vertices, $v_1$ and $v_6$. With Player 2 unable to make one of them unmarkable, Player 1 will win by coloring the other of these two vertices with a 3, similar to Case 2 for $P_5$. This strategy is demonstrated in Figure \ref{fig:P_6game1}.
\begin{figure}[H]
\centering
\begin{tikzpicture}[scale=0.7]
    \node[draw,circle] (v1) {$v_1$};
    
    \path (v1) ++(0:2) node [draw,circle] (v2) {$v_2$};
    \node at ([shift={(90:0.4)}]v2.90) {1}; 

    \path (v2) ++(0:2) node [draw,circle] (v3) {$v_3$};
    
    \path (v3) ++(0:2) node [draw,circle] (v4) {$v_4$};
    \node at ([shift={(90:0.4)}]v4.90) {1}; 
    
    \path (v4) ++(0:2) node [draw,circle] (v5) {$v_5$};
    \node at ([shift={(90:0.4)}]v5.90) {2};
    
    \path (v5) ++(0:2) node [draw,circle] (v6) {$v_6$};

    \draw (v1) -- (v2) -- (v3) -- (v4) -- (v5) -- (v6);
    \end{tikzpicture}
    
\caption{$P_6$, Case 2, where Player 1 is in a winning position.}
\label{fig:P_6game1}
\end{figure}
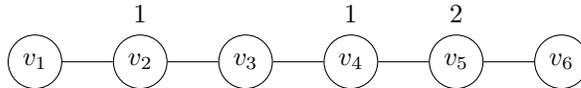
\end{proof}

At this point we digress slightly to illustrate that the requirement that the number of colors available be limited by the EDCN makes a difference in who has the winning strategy in some cases.

\begin{thm}\label{thm:numcolors}
    Player 2 has the winning strategy on $P_6$ if $\lambda(P_6)+1 = 4$ colors are allowed.
\end{thm}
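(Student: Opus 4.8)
The plan is to hand Player 2 an explicit strategy that forces the game on $P_6$ (played with the four colors $[4]$) to terminate after an even number of moves; since Player 2 moves second, Player 2 then makes the last legal move and wins. By Zermelo's theorem one of the players wins, so it suffices to describe and verify such a strategy.

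I would begin with a structural observation that shrinks the problem: with four colors available, neither endpoint $v_1$ nor $v_6$ can ever become unmarkable. Indeed, if $v_6$ is uncolored and $v_5$ already carries a color $c_5$, then $v_6$ lies on the single edge $\{v_5,v_6\}$, and placing colors $1,\dots,4$ on $v_6$ yields the four \emph{distinct} edge-colors $\{c_5,1\},\dots,\{c_5,4\}$; so $v_6$ is unmarkable only if all four of these already occur among the edges $v_1v_2,v_2v_3,v_3v_4,v_4v_5$, and a short argument (consecutive edges of a path share a vertex, hence share a color, forcing two of these edges to coincide) shows that is impossible while keeping the edge-coloring injective. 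Consequently the game can end early only through one of the \emph{internal} vertices $v_2,v_3,v_4,v_5$ becoming unmarkable, which — apart from an elaborate ``saturation'' configuration (an internal vertex blocked by three pre-existing edges) that one shows never arises under Player 2's play — happens precisely when it acquires two colored neighbors of a common color.

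For the strategy itself I would use the reflection $\sigma\colon v_i\mapsto v_{7-i}$ and its pairing $\{v_1,v_6\},\{v_2,v_5\},\{v_3,v_4\}$, with Player 2 maintaining the invariant that after each of Player 2's moves the colored set is $\sigma$-symmetric (a union of these pairs) and no internal vertex is unmarkable — with one controlled exception. Normally, when Player 1 colors a vertex $v$, Player 2 answers on $\sigma(v)$: since at most three edges are then present and four colors are available, Player 2 can always choose a color for $\sigma(v)$ keeping the partial induced edge-coloring injective, and the position is again $\sigma$-symmetric. The exception is a Player 1 ``sacrifice'' that makes an internal vertex $w$ unmarkable (necessarily by giving $w$ two equally colored neighbors); then Player 2 does not play $\sigma(v)$ but instead makes $\sigma(w)$ unmarkable as well — directly, or by recoloring on the reflected side, or via an endpoint — so that two uncolored vertices are killed at once and the parity of the move count is preserved. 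The opening also needs a patch: if Player 1 first plays on $v_2$ or $v_5$, a plain reflection leaves Player 2 open to exactly such a sacrifice on $v_3$ (resp.\ $v_4$), so Player 2 should instead reply on the neighbor $v_3$ (resp.\ $v_4$) and pass to a shifted but equally controllable family of positions.

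The proof is then a finite case analysis: the cases are Player 1's opening (on $v_1$, $v_2$, or $v_3$, up to the reflection) and, inductively, Player 1's move out of each of the few admissible positions; in each case one exhibits Player 2's reply and checks legality (using the four-color slack and the bound on the number of present edges) and restoration of the invariant. I expect the genuine obstacle to be the sacrifice branch, where one must verify both that Player 2 can always manufacture the matching unmarkable vertex without ever being forced into an illegal reply, and that this never accidentally yields an odd-length termination — and this is where the auxiliary fact (no internal vertex becomes unmarkable by ``saturation'' in a position Player 2 permits) is needed. Once those sub-cases are dispatched, the parity count closes the argument: every game ends in $6$ moves, or in $4$ moves with a pair of unmarkable vertices left over — either way an even number — so Player 2 wins.
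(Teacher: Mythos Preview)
Your reflection idea is natural, but the strategy as you describe it does not survive a concrete line of play, and the place it breaks is exactly the ``opening patch'' you single out as delicate.

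Take your patch for an opening on $v_2$: Player 1 colors $v_2$ with $1$, and you have Player 2 reply on $v_3$ with some color $x$ (creating the edge $\{1,x\}$). Now Player 1 simply plays $x$ on $v_5$. This is legal --- $v_4$ and $v_6$ are uncolored, so no edge is created --- and it makes $v_4$ unmarkable, since any color $c$ on $v_4$ would produce $\{x,c\}$ on both $v_3v_4$ and $v_4v_5$. Only the endpoints $v_1,v_6$ remain markable (and, as you correctly argue, they always are), so the game finishes in exactly five moves and Player~1 wins. Thus ``reply on the neighbor $v_3$'' is refuted outright, and ``pass to a shifted but equally controllable family of positions'' is not a substitute for an actual continuation. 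A closely related failure haunts the main reflection line: from the $\sigma$-symmetric position $v_1{=}1,\,v_2{=}2,\,v_5{=}c,\,v_6{=}1$ (reached after two mirrored rounds), Player~1 can play $c$ on $v_3$, killing $v_4$; since $\sigma(v_4)=v_3$ is the very vertex just colored, your sacrifice rule cannot mirror it. Avoiding this requires Player~2 to choose colors so that $c(v_5)$ is already forbidden at $v_3$ --- a constraint your proposal never states and your ``at most three edges, four colors'' legality count does not provide. In short, the invariant you want (symmetric support, no internal vertex unmarkable) is not preserved by ``any legal color on $\sigma(v)$,'' and the $\{v_3,v_4\}$ self-paired sacrifice is a structural hole, not a detail to be filled in later.

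For comparison, the paper's proof does not use reflection at all. Player~2's first move always \emph{matches Player~1's color}: if Player~1 opens on $v_1,v_2,v_5,v_6$, Player~2 plays the same color on the adjacent vertex in $\{v_1,v_2\}$ or $\{v_5,v_6\}$, immediately creating a $\{1,1\}$ edge; if Player~1 opens on $v_3$ or $v_4$, Player~2 plays the same color on the far endpoint $v_6$ or $v_1$. From each of these two base positions the paper then runs a short, fully explicit case split on Player~1's second move (four subcases in the first configuration, five in the second), in each case naming Player~2's reply and checking termination after an even number of moves. The organizing principle is color-matching and controlling which $\{1,\cdot\}$ edges exist, not board symmetry; this sidesteps the $\{v_3,v_4\}$ problem entirely.
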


\begin{proof}
    Player 2 has the winning strategy on $P_6$ by ensuring that after Player 2's first move, the game state is equivalent (up to symmetry and color permutation) to one of the two game states illustrated in Figure \ref{fig:P_6fourcolors}. To see this, consider two cases.

    \begin{figure}[H]
\centering
\resizebox{0.6\textwidth}{!}{%
\begin{circuitikz}
\tikzstyle{every node}=[font=\Huge]
\draw [, line width=0.8pt ] (0,12.5) circle (0.75cm) node {\Huge $v_1$} ;
\draw [, line width=0.8pt ] (5,12.5) circle (0.75cm) node {\Huge $v_2$} ;
\draw [, line width=0.8pt ] (10,12.5) circle (0.75cm) node {\Huge $v_3$} ;
\draw [, line width=0.8pt ] (15,12.5) circle (0.75cm) node {\Huge $v_4$} ;
\draw [, line width=0.8pt ] (20,12.5) circle (0.75cm) node {\Huge $v_5$} ;
\draw [, line width=0.8pt ] (25,12.5) circle (0.75cm) node {\Huge $v_6$} ;
\draw [line width=1.4pt, short] (0.75,12.5) -- (4.25,12.5);
\draw [line width=1.4pt, short] (5.75,12.5) -- (9.25,12.5);
\draw [line width=1.4pt, short] (10.75,12.5) -- (14.25,12.5);
\draw [line width=1.4pt, short] (15.75,12.5) -- (19.25,12.5);
\draw [line width=1.4pt, short] (20.75,12.5) -- (24.25,12.5);

\draw [, line width=0.8pt ] (0,8.75) circle (0.75cm) node {\Huge $v_1$} ;
\draw [, line width=0.8pt ] (5,8.75) circle (0.75cm) node {\Huge $v_2$} ;
\draw [, line width=0.8pt ] (10,8.75) circle (0.75cm) node {\Huge $v_3$} ;
\draw [, line width=0.8pt ] (15,8.75) circle (0.75cm) node {\Huge $v_4$} ;
\draw [, line width=0.8pt ] (20,8.75) circle (0.75cm) node {\Huge $v_5$} ;
\draw [, line width=0.8pt ] (25,8.75) circle (0.75cm) node {\Huge $v_6$} ;
\draw [line width=1.4pt, short] (0.75,8.75) -- (4.25,8.75);
\draw [line width=1.4pt, short] (5.75,8.75) -- (9.25,8.75);
\draw [line width=1.4pt, short] (10.75,8.75) -- (14.25,8.75);
\draw [line width=1.4pt, short] (15.75,8.75) -- (19.25,8.75);
\draw [line width=1.4pt, short] (20.75,8.75) -- (24.25,8.75);
\node [font=\Huge] at (0,13.75) {$1$};
\node [font=\Huge] at (5,13.75) {$1$};
\node [font=\Huge] at (0,10) {$1$};
\node [font=\Huge] at (15,10) {$1$};
\end{circuitikz}
}%
\vspace{0.3cm}

\caption{Game states after second move that will put Player 2 in a winning position on $P_6$ with $\lambda(P_6)+1=4$ colors.}
\label{fig:P_6fourcolors}
\end{figure}
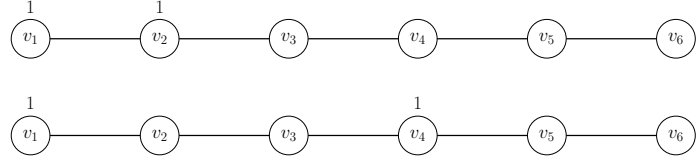

    \item\underline{Case 1:} \emph{Player 1's first move is on $v_1$, $v_2$, $v_5$ or $v_6$.} Player 2's first move should be to play the same color on $v_2$, $v_1$, $v_6$, or $v_5$, respectively, ensuring that the game state is equivalent to the top graph in Figure \ref{fig:P_6fourcolors}. Without loss of generality, assume vertices $v_1$ and $v_2$ are color 1 after Player 2's first move, as in the figure. These leads to four subcases based on where Player 1 plays next and what color is played.

       \begin{enumerate}
        \item[1.1.] \emph{Player 1 plays on $v_4$ or $v_6$ with color 1:} Player 2 should play on $v_6$ or $v_4$, respectively, using color 1 as well. This results in both vertices $v_3$ and $v_5$ becoming unmarkable, meaning Player 2 wins.

        \item[1.2.] \emph{Player 1 plays on $v_5$ with color 1:} All remaining vertices are adjacent to a 1, and a $\{1,1\}$ edge has already been created, so the color 1 can no longer be played. Further, all three remaining colors $2$, $3$, and $4$ must be played because playing one of these colors on more than one vertex will result in a duplicate edge color due to every remaining vertex being adjacent to a 1. By parity, this results in a win for Player 2.
        
        \item[1.3.] \emph{Player 1 plays on $v_5$ with a color other than 1:} Without loss of generality, suppose Player 1 plays color 2. Player 2 should play color 1 on $v_6$. Note that at this point there is both a $\{1,1\}$ edge and a $\{1,2\}$ edge. Since one of the remaining vertices is adjacent to a 1 and the other is adjacent to a 2, neither of these vertices may be colored 1 or 2 without creating a duplicate edge color. This implies that Player 1 must play one of the remaining two colors as their next move. Player 2 may play the last color as the final move, winning the game.

        \item[1.4.] \emph{Player 1 plays on a vertex other than $v_5$ with a color other than 1:} Without loss of generality, suppose Player 1 plays color 2. Player 2 should play color 1 on $v_5$. In each case, a $\{1,2\}$ edge has been created either by Player 1 (if they played on $v_3$), or by Player 2. Additionally, every remaining vertex is adjacent to a 1 and there is already a $\{1,1\}$ edge, so neither 1 nor 2 may be played on any remaining vertices. This means Player 1 must play one of the remaining two colors as their next move. Player 2 may play the last color as the final move, winning the game.
    \end{enumerate}

    \item\underline{Case 2:} \emph{Player 1's first move is on $v_3$ or $v_4$.} Player 2's first move should be to play the same color on $v_6$ or $v_1$, respectively, ensuring that the game state is equivalent to the bottom graph in Figure \ref{fig:P_6fourcolors}. Without loss of generality, assume vertices $v_1$ and $v_4$ are color 1 after Player 2's first move, as in the figure. These leads to five subcases based on where Player 1 plays next and what color they play.

    \begin{enumerate}
        \item[2.1.] \emph{Player 1 plays on a vertex other than $v_5$ with color 1:} Player 2 should play a 1 on $v_6$ if Player 1 did not, and play a 1 on $v_2$ if Player 1 did. As in Case 1.1, two unmarkable vertices have been created, ending the game. Player 2 is therefore the winner.
        
        \item[2.2.] \emph{Player 1 plays on $v_5$ with color 1:} All remaining vertices are adjacent to a 1. The argument follows as in Case 1.2.

        \item[2.3.] \emph{Player 1 plays on $v_2$ or $v_3$ with a color other than 1:}  Without loss of generality, suppose Player 1 plays color 2. Player 2 should play color 1 on $v_5$. At this point, a $\{1,2\}$ edge and a $\{1,1\}$ edge have been created. Additionally, one of the remaining vertices is adjacent to only a 1 while the other is adjacent to a 1 and a 2. This implies neither of the remaining vertices may be colored with a 1 or 2. Furthermore, this implies that Player 1 must play one of the remaining two colors as their next move. Player 2 may play the last color as the final move, winning the game.
        
        \item[2.4] \emph{Player 1 plays on $v_5$ with a color other than 1:}  Without loss of generality, suppose Player 1 plays color 2. Player 2's next move should be to play color 3 on $v_2$. At this point, the edges created are $\{1,3\}$ and $\{1,2\}$. Note then that a 3 cannot be played on $v_3$ because that would make a second $\{1,3\}$ edge. Additionally, the colors adjacent to the remaining vertices are all distinct: $v_3$ is adjacent to 1 and 3 while $v_6$ is adjacent to 2. This means that Player 2 can play a 4 on the remaining vertex regardless of Player's move, winning the game. 

        \item[2.5] \emph{Player 1 plays on $v_6$ with a color other than 1:}  Without loss of generality, suppose Player 1 plays color 2. As in Case 2.4, Player 2's next move should be to play color 3 on $v_2$. At this point the only edge created is a $\{1,3\}$ edge, but this implies that a 3 may not be played on $v_3$ nor $v_5$ without creating a duplicate $\{1,3\}$ edge. If Player 1 then plays a 4 on either vertex, Player 2 can play a 2 on the other. Otherwise, Player 2 can play the unused 4 on the remaining vertex to win the game since no vertex was made unmarkable.
    \end{enumerate}

    Therefore, because Player 2 can win in every possible case, Player 2 has the winning strategy on $P_6$ if $\lambda(P_6)+1 = 4$ colors are allowed.
\end{proof}

\subsection{Cycles}\label{subsec:cycles}

\noindent In this section we investigate winning strategies on cycles $C_n$ for $4\leq n\leq 7$. Because $C_3 = K_3$ we know already that this is a Player 2 winning graph via Theorem \ref{thm:complete}. In what follows, we show that $C_4$ and $C_7$ are Player 2 winning graphs as well, while $C_5$ and $C_6$ are Player 1 winning graphs.

Note that we assume the vertices $v_1,v_2,\dots v_n$ in each cycle $C_n$ are labeled in a clockwise manner.

\begin{thm}\label{thm:C4}
    Player 2 has the winning strategy on $C_4$.
\end{thm}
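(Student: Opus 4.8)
The plan is to have Player 2 use a symmetric ``antipodal mirroring'' strategy that forces the game to end after exactly two moves. I would first record that $\lambda(C_4) = 3$: a $2$-coloring cannot be edge-distinguishing since two vertex colors yield only the three edge colors $\{1,1\}$, $\{1,2\}$, $\{2,2\}$ while $C_4$ has four edges, and the coloring $1,2,3,1$ read around the cycle is edge-distinguishing. So the game is played with three colors, although the strategy below works no matter how many colors are available.

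Write $C_4 = v_1v_2v_3v_4v_1$. Since $C_4$ is vertex-transitive and the color set is symmetric, I may assume Player 1's opening move colors $v_1$ with color $1$. Player 2's response is to color the antipodal vertex $v_3$ with color $1$ as well. This is a legal move: $v_1$ and $v_3$ are nonadjacent in $C_4$, so the induced subgraph on $\{v_1,v_3\}$ has no edges and the partial induced edge-coloring after Player 2's move is empty, hence trivially injective.

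Finally I would verify that the board is now terminating, with the win going to Player 2. Both remaining vertices $v_2$ and $v_4$ are adjacent to $v_1$ and to $v_3$, which now carry the same color $1$; coloring either of them with any color $c\in[3]$ would induce the edge color $\{1,c\}$ on two distinct edges, violating the edge-distinguishing condition, so $v_2$ and $v_4$ are both unmarkable. Thus the game ends after two moves, the last of which was Player 2's, and Player 2 wins. There is essentially no obstacle here: the only points needing care are the two ``without loss of generality'' reductions (vertex-transitivity of $C_4$ and symmetry of the color set) and confirming that no uncolored vertex other than $v_2,v_4$ remains, both of which are immediate. The conceptual content is just the observation that Player 2 only needs to drive the game to an even-length terminating board, and the antipodal-same-color move accomplishes this in the fewest possible moves.
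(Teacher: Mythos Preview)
Your proof is correct and follows essentially the same approach as the paper: Player 2 mirrors Player 1's color on the antipodal vertex, rendering the remaining two vertices unmarkable. The only difference is that you supply a self-contained argument for $\lambda(C_4)=3$ and explicitly verify legality of Player 2's move, whereas the paper simply cites the EDCN and leaves those checks implicit.
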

\begin{proof}
    Note that $\lambda(C_4) = 3$ from \cite{frank1982edcn}. Assume we have a cycle $C_4$ of four vertices labeled $v_1$ through $v_4$. Without loss of generality, assume that Player 1 plays the color $1$ at vertex $v_1$. Then, Player 2 should play the same color on $v_3$. This move makes the only other two vertices unmarkable 
    Therefore, since Player 2 was the last player to play a move, Player 2 wins. This is demonstrated in Figure \ref{fig:C_4game}.
    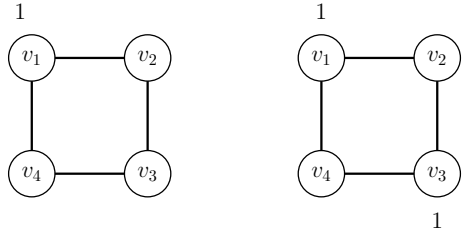
\begin{figure}[H]
    \centering
    \resizebox{0.4\textwidth}{!}{%
    \begin{circuitikz}
    \tikzstyle{every node}=[font=\LARGE]
    \draw [, line width=0.8pt ] (-3.75,16.25) circle (0.5cm) node {\LARGE $v_1$} ;
    \draw [, line width=0.8pt ] (-1.25,16.25) circle (0.5cm) node {\LARGE $v_2$} ;
    \draw [, line width=0.8pt ] (-1.25,13.75) circle (0.5cm) node {\LARGE $v_3$} ;
    \draw [, line width=0.8pt ] (-3.75,13.75) circle (0.5cm) node {\LARGE $v_4$} ;
    \draw [, line width=0.8pt ] (2.5,16.25) circle (0.5cm) node {\LARGE $v_1$} ;
    \draw [, line width=0.8pt ] (2.5,13.75) circle (0.5cm) node {\LARGE $v_4$} ;
    \draw [, line width=0.8pt ] (5,13.75) circle (0.5cm) node {\LARGE $v_3$} ;
    \draw [, line width=0.8pt ] (5,16.25) circle (0.5cm) node {\LARGE $v_2$} ;
    \draw [line width=1.4pt, short] (-3.75,15.75) -- (-3.75,14.25);
    \draw [line width=1.4pt, short] (-3.25,16.25) -- (-1.75,16.25);
    \draw [line width=1.4pt, short] (-1.25,15.75) -- (-1.25,14.25);
    \draw [line width=1.4pt, short] (-3.25,13.75) -- (-1.75,13.75);
    \draw [line width=1.4pt, short] (2.5,15.75) -- (2.5,14.25);
    \draw [line width=1.4pt, short] (3,16.25) -- (4.5,16.25);
    \draw [line width=1.4pt, short] (5,15.75) -- (5,14.25);
    \draw [line width=1.4pt, short] (3,13.75) -- (4.5,13.75);
    \node [font=\LARGE] at (-4,17.25) {$1$};
    \node [font=\LARGE] at (2.5,17.25) {$1$};
    \node [font=\LARGE] at (5,12.75) {$1$};
    \end{circuitikz}
    }%
    \caption{$C_4$, where Player 2 is in a  winning position. }\label{fig:C_4game}
    \end{figure}
\end{proof}

\begin{thm}\label{thm:C5}
    Player 1 has the winning strategy on $C_5$.
\end{thm}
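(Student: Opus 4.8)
The plan is to give Player~1 an explicit winning strategy and check that it survives every reply of Player~2. Recall from \cite{frank1982edcn} that $\lambda(C_5)=3$, so the game is played with the colors $1,2,3$; recall also that $C_5$ has five vertices, so any game in which every vertex eventually gets colored lasts $5$ moves and Player~1 makes the last one. Player~1's objective is therefore to force every line of play to end after an \emph{odd} number of moves: either after the full $5$-move coloring, or after $3$ moves with the two uncolored vertices both unmarkable.

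Player~1 opens by playing $1$ on $v_1$; by vertex-transitivity of $C_5$ and the symmetry among the unused colors this loses no generality. Using the reflection of $C_5$ that fixes $v_1$ (swapping $v_2\leftrightarrow v_5$ and $v_3\leftrightarrow v_4$) together with the symmetry between colors $2$ and $3$, Player~2's reply reduces to four cases: a neighbour of $v_1$, say $v_2$, or a non-neighbour, say $v_3$, is colored, each with the repeated color $1$ or with the fresh color $2$.

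If Player~2 plays $1$ on the non-neighbour $v_3$, Player~1 answers with $1$ on $v_4$: then $v_2$ is adjacent to the two $1$'s at $v_1,v_3$ and $v_5$ is adjacent to the two $1$'s at $v_1,v_4$, so both are unmarkable and the game ends after $3$ moves. In each of the other three cases Player~1 makes a single reply chosen so that the board must be completed: reply $2$ on $v_4$ (the unique vertex adjacent to neither colored vertex) when Player~2 played $1$ on $v_2$; reply $2$ on $v_3$ (creating the edge $\{2,2\}$) when Player~2 played $2$ on $v_2$; and reply $1$ on $v_2$ (creating the edges $\{1,1\}$ and $\{1,2\}$) when Player~2 played $2$ on $v_3$. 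In each of these three positions only two vertices remain uncolored, and one checks --- running over the few legal fourth moves of Player~2 --- that afterwards the single remaining vertex still admits a legal color (in fact a forced one). Hence the game runs its full $5$ moves, and Player~1 wins in every case.

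The one step that requires genuine work is this last verification: one must confirm that along these three lines no vertex is ever rendered unmarkable before the board is full, so that the parity count is valid. This is a finite, bounded check on the induced edge-colorings, made manageable by the observation that after Player~1's reply the two uncolored vertices have all of their incident edges determined by already-colored neighbours, except for the single edge possibly joining the two uncolored vertices themselves; consequently the list of colors legal at each of them has length at most two and never collapses to nothing. Assembling the four cases then completes the proof.
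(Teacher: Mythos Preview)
Your proof is correct. The overall plan matches the paper's: Player~1 opens with $1$ on $v_1$, you reduce Player~2's reply by symmetry, and you then specify Player~1's second move in each case so that the game ends after an odd number of moves.

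The difference lies in the choice of Player~1's second move. The paper has Player~1 respond with another~$1$ in \emph{every} case: if Player~2 also played a~$1$, a third~$1$ can always be placed so as to terminate the game after three moves (your ``three $1$'s'' line, but applied uniformly); if Player~2 played a~$2$, Player~1 plays a~$1$ to form a consecutive $1{-}1{-}2$ path, after which the two remaining vertices can always be completed with a~$3$ available for the last move. Your replies agree with the paper's when Player~2 plays on the non-neighbour $v_3$, but diverge when Player~2 plays on the neighbour $v_2$: you answer $1$ on $v_2$ with $2$ on $v_4$, and $2$ on $v_2$ with $2$ on $v_3$, whereas the paper would answer with $1$ on $v_4$ and $1$ on $v_5$ respectively. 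Both sets of replies work, but the paper's uniform ``play another~$1$'' rule collapses the endgame analysis to a single configuration in each of its two cases, while your three completion lines each need their own (admittedly short) verification --- which you acknowledge but leave implicit. If you want to tighten the write-up, either carry out those three small checks explicitly or adopt the paper's uniform reply and inherit its single endgame argument.
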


\begin{proof}
     Players have the ability to play three different colors on this graph, as $\lambda(C_5) = 3$ \cite{frank1982edcn}. Assume, without loss of generality, that Player 1 plays the color $1$ at vertex $v_1$. There are two cases. 

    \item\underline{Case 1:} \emph{Player 2 plays another $1$.} No matter where Player 2 plays this $1$, Player 1 can play a terminating move by playing another $1$ in a remaining vertex to create a configuration isomorphic to the one displayed in Figure \ref{fig:C_5game1}. Doing so make the remaining two vertices unmarkable, so Player 1 wins.
    
    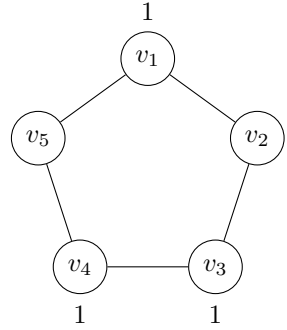
\begin{figure}[H]
    \centering
    
    \begin{tikzpicture}[scale=0.9]
    \node[draw,circle] (v1) {$v_1$};
    \node at ([shift={(90:0.3)}]v1.90) {1}; 
    
    \path (v1) ++(-36:2) node [draw,circle] (v2) {$v_2$};
    
    \path (v2) ++(-108:2) node [draw,circle] (v3) {$v_3$};
    \node at ([shift={(270:0.3)}]v3.270) {1}; 
    
    \path (v3) ++(180:2) node [draw,circle] (v4) {$v_4$};
    \node at ([shift={(270:0.3)}]v4.270) {1}; 

    \path (v4) ++(108:2) node [draw,circle] (v5) {$v_5$};
    
    \draw (v1) -- (v2) -- (v3) -- (v4) -- (v5) -- (v1);
    \end{tikzpicture}
    
    \vspace{0.5cm}
    \caption{$C_5$, Case 1, where Player 1 just won.}\label{fig:C_5game1}
    \end{figure}

    \item\underline{Case 2:} \emph{Player 2 plays a different color.} If Player 2 plays another color, say $2$, on any vertex, Player 1 should then create a path of three vertices of colors $1$, $1$, $2$ by playing another $1$. This can always be done since every vertex is a distance of $2$ or less away from $v_1$. By creating this path, there will be two vertices left, but a third color, $3$, can be played in both of them. Player 2 cannot play another $2$ to make the last vertex unmarkable because this move would create two $\{1, 2\}$ edges. Therefore, no matter what Player 2 plays for the second move, Player 1 can play a $3$ on the last remaining vertex. This completes the graph, and Player 1 has the last move and wins the game. This case is shown below in Figure \ref{fig:C_5game2}. 
    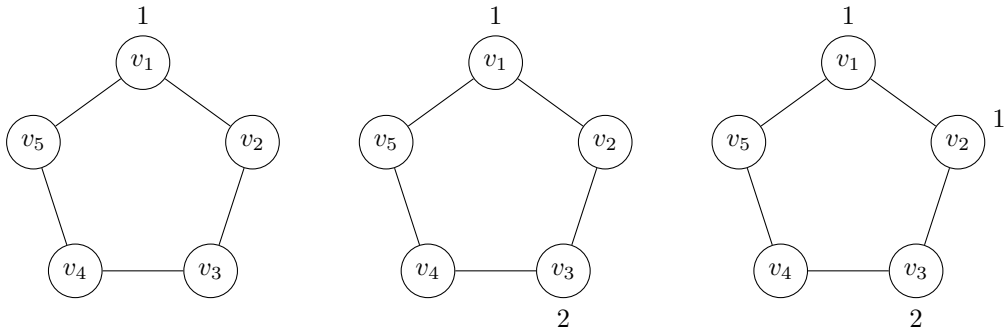
\begin{figure}[H]
    \centering
    
    \begin{minipage}{0.3\textwidth}
    \begin{tikzpicture}[scale=0.9]
    \node[draw,circle] (v1) {$v_1$};
    \node at ([shift={(90:0.3)}]v1.90) {1}; 
    
    \path (v1) ++(-36:2) node [draw,circle] (v2) {$v_2$};
    
    \path (v2) ++(-108:2) node [draw,circle] (v3) {$v_3$};
    \node at ([shift={(270:0.3)}]v3.270) {\phantom{2}};
    
    \path (v3) ++(180:2) node [draw,circle] (v4) {$v_4$};

    \path (v4) ++(108:2) node [draw,circle] (v5) {$v_5$};
    
    \draw (v1) -- (v2) -- (v3) -- (v4) -- (v5) -- (v1);
    \end{tikzpicture}
    \end{minipage}
    \begin{minipage}{0.3\textwidth}
    \begin{tikzpicture}[scale=0.9]
    \node[draw,circle] (v1) {$v_1$};
    \node at ([shift={(90:0.3)}]v1.90) {1}; 
    
    \path (v1) ++(-36:2) node [draw,circle] (v2) {$v_2$};
    
    \path (v2) ++(-108:2) node [draw,circle] (v3) {$v_3$};
    \node at ([shift={(270:0.3)}]v3.270) {2};
    
    \path (v3) ++(180:2) node [draw,circle] (v4) {$v_4$};

    \path (v4) ++(108:2) node [draw,circle] (v5) {$v_5$};
    
    \draw (v1) -- (v2) -- (v3) -- (v4) -- (v5) -- (v1);
    \end{tikzpicture}
    \end{minipage}
    \begin{minipage}{0.3\textwidth}
    \begin{tikzpicture}[scale=0.9]
    \node[draw,circle] (v1) {$v_1$};
    \node at ([shift={(90:0.3)}]v1.90) {1}; 
    
    \path (v1) ++(-36:2) node [draw,circle] (v2) {$v_2$};
    \node at ([shift={(30:0.3)}]v2.30) {1}; 
    
    \path (v2) ++(-108:2) node [draw,circle] (v3) {$v_3$};
    \node at ([shift={(270:0.3)}]v3.270) {2}; 
    
    \path (v3) ++(180:2) node [draw,circle] (v4) {$v_4$};

    \path (v4) ++(108:2) node [draw,circle] (v5) {$v_5$};
    
    \draw (v1) -- (v2) -- (v3) -- (v4) -- (v5) -- (v1);
    \end{tikzpicture}
    \end{minipage}
    
    \vspace{0.5cm}
    \caption{$C_5$, Case 2, where Player 1 is in a winning position .}\label{fig:C_5game2}
    \end{figure}

\bigskip
\end{proof}

\begin{thm}\label{thm:C6}
    Player 1 has the winning strategy on $C_6$.
\end{thm}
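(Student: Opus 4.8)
We plan to exhibit an explicit winning strategy for Player 1, in the style of the proofs of Theorems \ref{thm:C5} and \ref{thm:P6}. Recall that $\lambda(C_6)=3$ \cite{frank1982edcn}, so three colors are available, and note the parity principle for a six-vertex board: since each move colors one vertex, the game ends after an odd number of moves exactly when an odd number of vertices have been colored at the end, and in that case Player 1 made the last legal move and wins. Since $C_6$ is vertex-transitive and the three colors are interchangeable, we may assume Player 1 begins by coloring $v_1$ with $1$, where the vertices are labeled $v_1,\dots,v_6$ cyclically. The reflection fixing $v_1$ and $v_4$ (and swapping $v_2\leftrightarrow v_6$, $v_3\leftrightarrow v_5$), together with the symmetry interchanging colors $2$ and $3$, reduces Player 2's reply to six cases: playing color $1$ or color $2$ on one of $v_2$, $v_3$, $v_4$. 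One also checks that no move of Player 2 here makes two vertices unmarkable at once (at most $v_2$ can die, only when $v_3$ receives color $1$), so Player 2 cannot win on the second move.

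The core of the argument is to specify Player 1's third move in each case so that the board afterward has exactly three colored vertices, exactly one unmarkable vertex, and two remaining markable vertices that are certain to be colored whatever happens next; the game then lasts exactly five moves, and Player 1 wins by the parity principle. If Player 2 plays a color $c$ on $v_2$ or $v_4$, Player 1 plays $c$ on the other of $v_2,v_4$; this makes $v_3$ unmarkable (it lies between two vertices of color $c$), and one checks the move is legal and makes no other vertex unmarkable. If Player 2 plays on $v_3$, Player 1 plays color $2$ on $v_5$: when Player 2 used color $1$, the vertex $v_2$ is already unmarkable and $v_5=2$ creates no new unmarkable vertex (note that $v_5=1$ would, wrongly, also kill $v_4$); when Player 2 used color $2$, the move $v_5=2$ makes $v_4$ unmarkable. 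Up to isomorphism of colored boards, these replies produce only three positions, which we call $E_1$, $E_2$, $E_3$ (arising from playing on $v_2/v_4$ with color $1$, on $v_2/v_4$ with color $2$, and on $v_3$, respectively).

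The last step is the endgame check: in each of $E_1$, $E_2$, $E_3$ there are two uncolored markable vertices, and we must verify that after Player 2 colors one of them with any legal color, Player 1 can still legally color the other (and also that both are indeed markable immediately after Player 1's third move). Because only three colors are in play and at most two edge-colors have appeared at that stage, this is a short finite verification: for each legal response of Player 2 one exhibits the typically unique legal color for Player 1. Granting this, the game always ends after exactly five colored vertices, so Player 1 makes the last legal move and hence has a winning strategy. The only real obstacle is bookkeeping — one must be careful that Player 1's third move never creates a second unmarkable vertex (which would hand the parity back to Player 2) and that no move of Player 2 in $E_1$, $E_2$, $E_3$ can strand the final vertex — and the two symmetry reductions, together with the collapse of the six opening replies into just three endgame positions, are what keep the case analysis manageable.
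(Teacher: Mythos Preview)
Your proposal is correct and follows the same overall template as the paper's proof: open with $1$ on $v_1$, case-split on Player~2's reply, and choose a third move that leaves an odd number of vertices to be played. The specific strategy you prescribe differs in a couple of places, however. When Player~2 plays $1$ on $v_3$, the paper responds with $1$ on $v_5$, which kills \emph{all three} remaining vertices and ends the game after three moves; your response of $2$ on $v_5$ instead keeps $v_4$ and $v_6$ alive and pushes the game to five moves. (Your parenthetical that $v_5=1$ ``wrongly'' kills $v_4$ is misleading --- that move also kills $v_6$, so it wins outright rather than spoiling parity.) Similarly, for Player~2's move of $2$ on $v_3$ the paper plays $1$ on $v_5$ while you play $2$ on $v_5$; both create exactly one unmarkable vertex and work. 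Your symmetry reduction to three endgame positions $E_1,E_2,E_3$ is a tidier packaging than the paper's six subcases, and your observation that the two $v_3$ subcases collapse to one colored-board isomorphism class is a nice touch the paper does not make. The trade-off is that the paper's Case~1.2 terminates immediately, whereas your uniform five-move plan requires the extra endgame verification you sketch.
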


\begin{proof}
     Players have the ability to play three different colors on this graph, as $\lambda(C_6) = 3$ \cite{frank1982edcn}. Assume without loss of generality that Player 1 plays the color $1$ at vertex $v_1$. No matter what Player 2 plays, the strategy for Player 1 remains the same: play a move that creates one unmarkable vertex. However, there remains several cases based on where and what Player 2 plays.

    \item\underline{Case 1:} \emph{Player 2 plays another $1$.} 
    This leads to three subcases based on where Player 2 chooses to play the $1$.
    \begin{enumerate}
        \item[1.1.] \emph{Player 2 plays on $v_2$ or $v_6$:} In this case, Player 1's optimal move would be to play another $1$ on $v_4$. This move makes one unmarkable vertex, and leaves only two vertices markable. However, since no more $1$s can be played and neither $2$ nor $3$ have been played yet, whichever color Player 2 chooses to play, Player 1 will win by playing the other color on the last remaining vertex, therefore winning the game.
        \item[1.2.] \emph{Player 2 plays on $v_3$ or $v_5$:} Player 1 will win in this case by playing another $1$ on the other vertex. This means that if Player 2 played on $v_3$, Player 1 would play on $v_5$ and vice versa. This leaves the remaining vertices unmarkable. Therefore, since the last move was made by Player 1, this move wins the game.
        \item[1.3.] \emph{Player 2 plays on $v_4$:} This case is the reverse of when Player 2 played on $v_2$ or $v_6$. Therefore, Player 1 has the winning move in this situation by playing on either $v_2$ or $v_6$ with another $1$. This creates an identical game state to that described in 1.1, so Player 1 wins this game board. This case is shown in Figure \ref{fig:C_6game1}.
    \end{enumerate}
    
\begin{figure}[H]
\centering
\begin{minipage}{0.3\textwidth}
    \begin{tikzpicture}[scale=0.9]
    \node[draw,circle] (v1) {$v_1$};
    \node at ([shift={(90:0.3)}]v1.90) {1}; 
    
    \path (v1) ++(0:2) node [draw,circle] (v2) {$v_2$};
    
    \path (v2) ++(-60:2) node [draw,circle] (v3) {$v_3$};
    
    \path (v3) ++(-120:2) node [draw,circle] (v4) {$v_4$};
    \node at ([shift={(270:0.3)}]v4.270) {}; 

    \path (v4) ++(180:2) node [draw,circle] (v5) {$v_5$};
    
    \path (v5) ++(120:2) node [draw,circle] (v6) {$v_6$};
    
    \draw (v1) -- (v2) -- (v3) -- (v4) -- (v5) -- (v6) -- (v1);
    \end{tikzpicture}
    \end{minipage}\hspace{0.3cm}
    \begin{minipage}{0.3\textwidth}
    \begin{tikzpicture}[scale=0.9]
    \node[draw,circle] (v1) {$v_1$};
    \node at ([shift={(90:0.3)}]v1.90) {1}; 
    
    \path (v1) ++(0:2) node [draw,circle] (v2) {$v_2$};
    
    \path (v2) ++(-60:2) node [draw,circle] (v3) {$v_3$};
    
    \path (v3) ++(-120:2) node [draw,circle] (v4) {$v_4$};
    \node at ([shift={(270:0.3)}]v4.270) {1}; 

    \path (v4) ++(180:2) node [draw,circle] (v5) {$v_5$};
    
    \path (v5) ++(120:2) node [draw,circle] (v6) {$v_6$};
    
    \draw (v1) -- (v2) -- (v3) -- (v4) -- (v5) -- (v6) -- (v1);
    \end{tikzpicture}
    \end{minipage}\hspace{0.3cm}
    \begin{minipage}{0.3\textwidth}
    \begin{tikzpicture}[scale=0.9]
    \node[draw,circle] (v1) {$v_1$};
    \node at ([shift={(90:0.3)}]v1.90) {1}; 
    
    \path (v1) ++(0:2) node [draw,circle] (v2) {$v_2$};
    \node at ([shift={(90:0.3)}]v2.90) {1}; 
    
    \path (v2) ++(-60:2) node [draw,circle] (v3) {$v_3$};
    
    \path (v3) ++(-120:2) node [draw,circle] (v4) {$v_4$};
    \node at ([shift={(270:0.3)}]v4.270) {1}; 

    \path (v4) ++(180:2) node [draw,circle] (v5) {$v_5$};
    
    \path (v5) ++(120:2) node [draw,circle] (v6) {$v_6$};
    
    \draw (v1) -- (v2) -- (v3) -- (v4) -- (v5) -- (v6) -- (v1);
    \end{tikzpicture}
    \end{minipage}
    \vspace{0.5cm}
    
\caption{$C_6$, Case 1.3, where Player 1 is in a winning position.}
\label{fig:C_6game1}
\end{figure}
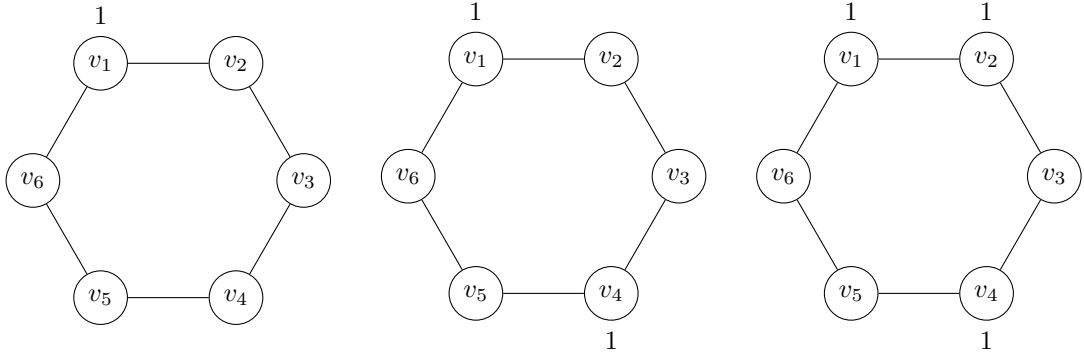

    \item\underline{Case 2:} \emph{Player 2 plays a different color.} Assume without loss of generality that Player 2 plays a $2$. This also leads to three subcases depending on where Player 2 plays the $2$.
    \begin{enumerate}
        \item[2.1.] \emph{Player 2 plays on $v_2$ or $v_6$:} Player 1's optimal move in this case is to play another $2$ at $v_4$. Doing so will create one unmarkable vertex and leave two markable vertices. Neither of these vertices can be made unmarkable because there is already a $\{1, 2\}$ edge. For these remaining vertices, $3$ can be played on both no matter what Player 2 chooses to play. Therefore, by then playing a $3$ as the final move, Player 1 wins.
        \item[2.2.] \emph{Player 2 Plays on $v_3$ or $v_5$:} Player 1's winning strategy in this case is to play a $1$ on the other vertex. This means that if Player 2 plays on $v_3$, Player 1 will play on $v_5$ and vice versa. This move creates one unmarkable vertex between itself and $v_1$. The only remaining markable vertices are adjacent to Player 2's $2$. For either of these vertices, if Player 2 plays a $1$ or a $2$, Player 1 wins by playing a $3$ on the last remaining vertex. Similarly, if Player 2 plays a $3$, Player 1 wins by playing a $1$ or a $2$ on the last vertex. This strategy is demonstrated in Figure \ref{fig:C_6game2}.
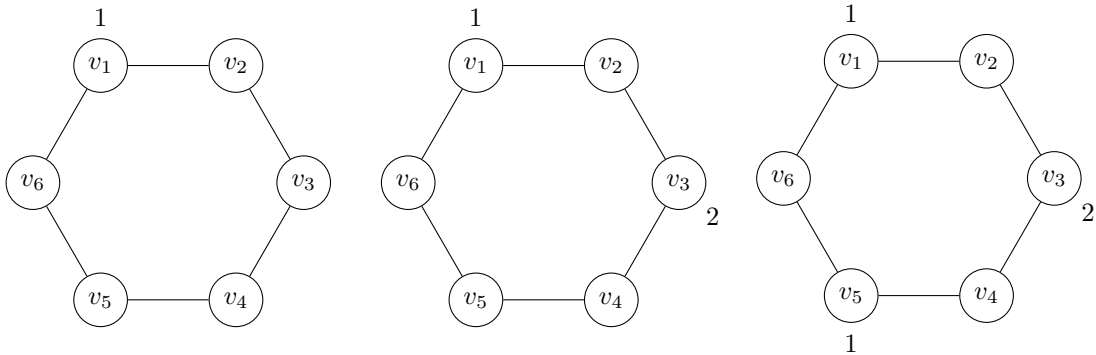
\begin{figure}[H]
\centering

\begin{minipage}{0.3\textwidth}
    \begin{tikzpicture}[scale=0.9]
    \node[draw,circle] (v1) {$v_1$};
    \node at ([shift={(90:0.3)}]v1.90) {1}; 
    
    \path (v1) ++(0:2) node [draw,circle] (v2) {$v_2$};
    
    \path (v2) ++(-60:2) node [draw,circle] (v3) {$v_3$};
    
    \path (v3) ++(-120:2) node [draw,circle] (v4) {$v_4$};
    \node at ([shift={(270:0.3)}]v4.270) {}; 

    \path (v4) ++(180:2) node [draw,circle] (v5) {$v_5$};
    
    \path (v5) ++(120:2) node [draw,circle] (v6) {$v_6$};
    
    \draw (v1) -- (v2) -- (v3) -- (v4) -- (v5) -- (v6) -- (v1);
    \end{tikzpicture}
    \end{minipage}\hspace{0.3cm}
    \begin{minipage}{0.3\textwidth}
    \begin{tikzpicture}[scale=0.9]
    \node[draw,circle] (v1) {$v_1$};
    \node at ([shift={(90:0.3)}]v1.90) {1}; 
    
    \path (v1) ++(0:2) node [draw,circle] (v2) {$v_2$};
    
    \path (v2) ++(-60:2) node [draw,circle] (v3) {$v_3$};
    \node at ([shift={(315:0.3)}]v3.315) {2};
    
    \path (v3) ++(-120:2) node [draw,circle] (v4) {$v_4$};
    \node at ([shift={(270:0.3)}]v4.270) {}; 

    \path (v4) ++(180:2) node [draw,circle] (v5) {$v_5$};
    
    \path (v5) ++(120:2) node [draw,circle] (v6) {$v_6$};
    
    \draw (v1) -- (v2) -- (v3) -- (v4) -- (v5) -- (v6) -- (v1);
    \end{tikzpicture}
    \end{minipage}\hspace{0.3cm}
    \begin{minipage}{0.3\textwidth}
    \begin{tikzpicture}[scale=0.9]
    \node[draw,circle] (v1) {$v_1$};
    \node at ([shift={(90:0.3)}]v1.90) {1}; 
    
    \path (v1) ++(0:2) node [draw,circle] (v2) {$v_2$};
    
    \path (v2) ++(-60:2) node [draw,circle] (v3) {$v_3$};
    \node at ([shift={(315:0.3)}]v3.315) {2};
    
    \path (v3) ++(-120:2) node [draw,circle] (v4) {$v_4$};

    \path (v4) ++(180:2) node [draw,circle] (v5) {$v_5$};
    \node at ([shift={(270:0.3)}]v5.270) {1};
    
    \path (v5) ++(120:2) node [draw,circle] (v6) {$v_6$};
    
    \draw (v1) -- (v2) -- (v3) -- (v4) -- (v5) -- (v6) -- (v1);
    \end{tikzpicture}
    \end{minipage}
    \vspace{0.5cm}
    
\caption{$C_6$, Case 2.2, where Player 1 is in a winning position.}
\label{fig:C_6game2}
\end{figure}
        \item[2.3.] \emph{Player 2 plays on $v_4$:} Player 1 has the winning strategy in this case by playing a $2$ on either $v_2$ or $v_6$. This creates an identical game state to that described in 2.1, so therefore Player 1 will win in this case.
    \end{enumerate}
    
\end{proof}

\begin{thm}\label{thm:C7}
    Player 2 has the winning strategy on $C_7$.
\end{thm}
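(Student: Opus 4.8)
The plan is a parity-driven case analysis, in the same spirit as the proofs of Theorems~\ref{thm:C5} and~\ref{thm:C6}. First recall that $\lambda(C_7)=4$: the seven edges of $C_7$ cannot all be distinguished by the six edge colors available with three colors, and four colors clearly suffice, so the game is played with four colors. Because $C_7$ has an odd number of vertices, a completely colored board uses seven moves and is therefore a Player~1 win; hence Player~2's objective is to force an \emph{odd} number of vertices to become unmarkable, and every branch will be closed simply by counting how many vertices end up colored. By vertex-transitivity of $C_7$ and the symmetry of the color set, we may assume Player~1's opening move is a $1$ on $v_1$. Player~2 then plays a $1$ on $v_3$; this instantly makes $v_2$ unmarkable, since both of its neighbors are now colored $1$. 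The resulting position is symmetric under the reflection of $C_7$ fixing $v_2$ and swapping $v_1\leftrightarrow v_3$, $v_4\leftrightarrow v_7$, $v_5\leftrightarrow v_6$, so up to this symmetry Player~1's reply lies on $v_4$ or on $v_5$, and up to permuting the three unused colors it is either the color $1$ or the color $2$.

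I would then treat the four resulting cases. If Player~1 replays a $1$ (on $v_4$ or on $v_5$), Player~2 replays a $1$ as well (on $v_6$ or on $v_7$, respectively); a short check shows this creates two further unmarkable vertices, so the board consists of four colored and three unmarkable vertices, the game ends after Player~2's fourth move, and Player~2 wins. If instead Player~1 plays the new color $2$, Player~2 plays a $1$ on $v_7$ when Player~1 played on $v_4$, and plays the fresh color $3$ on $v_6$ when Player~1 played on $v_5$. In each of these subcases exactly two markable vertices remain, and one verifies that each of them stays markable regardless of which is colored first; Player~2 then answers Player~1's fifth move with a legal sixth move, so the game ends after six moves with Player~2 again playing last.

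The crux is the analysis of these two ``two remaining vertices'' endgames: I must rule out that Player~1's fifth move renders the last vertex unmarkable, which would end the game after five moves and hand the win to Player~1. This reduces to checking which $2$-element multisets from $[4]$ are still available as edge colors at those two vertices — a finite but genuinely delicate computation, because naive responses fail. For instance, after Player~1 plays a $2$ on $v_5$, if Player~2 answers with a $1$ or a $2$ on $v_4$, then Player~1 can play a $1$ on $v_6$, making $v_7$ unmarkable and winning; this is precisely why Player~2 must spend the fresh color $3$ on $v_6$ there instead. Once these edge-color checks are carried out in each branch, every line of play terminates with Player~2 making the last legal move, establishing that Player~2 has the winning strategy on $C_7$.
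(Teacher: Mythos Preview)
Your strategy is correct and genuinely different from the paper's. The paper has Player~2 answer with a $1$ on the \emph{adjacent} vertex $v_2$, immediately creating a $\{1,1\}$ edge, and then runs a five-subcase analysis on Player~1's second move. Your choice of $v_3$ instead kills $v_2$ outright and leaves a position with an explicit reflection symmetry (fixing $v_2$, swapping $v_1\leftrightarrow v_3$, $v_4\leftrightarrow v_7$, $v_5\leftrightarrow v_6$), which halves the subsequent casework to the four cases you list. Each of your four prescribed responses for Player~2 does lead to a win, so the plan is sound.

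One correction to your motivating aside: after $v_1=v_3=1$ and Player~1's $2$ on $v_5$, you claim that the ``naive'' responses $v_4=1$ or $v_4=2$ fail because Player~1 can then play a $1$ on $v_6$. That move is actually \emph{illegal}: in both subcases a $\{1,2\}$ edge already exists (as $v_4v_5$ when $v_4=1$, and as $v_3v_4$ when $v_4=2$), so coloring $v_6=1$ would duplicate it via $v_5v_6$. In fact both of those responses work fine for Player~2. A response that genuinely fails is $2$ on $v_7$: then $v_6$ has both neighbors colored $2$ and becomes unmarkable, leaving only $v_4$, which Player~1 colors on move five to win. So your caution that the endgame checks are delicate is warranted, but the specific example should be replaced.
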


\begin{proof}
    Players have the ability to play 4 different colors on this graph, as $\lambda(C_7) = 4$ \cite{frank1982edcn}. Assume, without loss of generality, that Player 1 plays $1$ on vertex $v_1$. Player 2 plays another $1$ on vertex $v_2$. Then, no matter where or what Player 1 plays, Player 2's winning move is to create an unmarkable vertex. 
    \item\underline{Case 1:} \emph{Player 1 plays another $1$.} By playing another $1$, Player 1 cannot play on either $v_3$ or $v_7$. This case has 2 subcases based on where this color is played.
    \begin{enumerate}
        \item[1.1.] \emph{Player 1 plays on $v_4$ or $v_6$.} Playing another $1$ in either of these locations creates one unmarkable vertex. So to win, Player 2 plays a $1$ on the other vertex. This means that if Player 1 chose to play on $v_4$, Player 2 would play on $v_6$ and vice versa. This move then makes all remaining vertices unmarkable, as every other vertex is adjacent to two $1$s. Thus, Player 2 wins. This stratgey is shown below in Figure \ref{fig:C_7game1}.
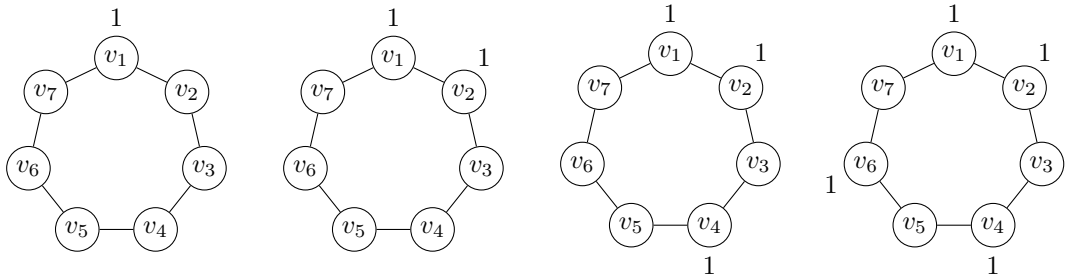
\begin{figure}[H]
    \begin{minipage}{0.2\textwidth}
    \begin{tikzpicture}[scale=1.2]
    \node[] (c) {};
    \path (c) ++(90:1) node [draw,circle,inner sep=0.7mm] (v1) {$v_1$};
    \path (c) ++(90+360/7:1) node [draw,circle,inner sep=0.7mm] (v7) {$v_7$};
    \path (c) ++(90+2*360/7:1) node [draw,circle,inner sep=0.7mm] (v6) {$v_6$};
    \path (c) ++(90+3*360/7:1) node [draw,circle,inner sep=0.7mm] (v5) {$v_5$};
    \path (c) ++(90+4*360/7:1) node [draw,circle,inner sep=0.7mm] (v4) {$v_4$};
    \path (c) ++(90+5*360/7:1) node [draw,circle,inner sep=0.7mm] (v3) {$v_3$};
    \path (c) ++(90+6*360/7:1) node [draw,circle,inner sep=0.7mm] (v2) {$v_2$};
    
    \draw (v1) -- (v2) -- (v3) -- (v4) -- (v5) -- (v6) -- (v7) -- (v1);

    \node at ([shift={(90:0.2)}]v1.90) {1}; 
    \node at ([shift={(60:0.2)}]v2.60) {}; 
    \node at ([shift={(270:0.2)}]v4.270) {};
    \node at ([shift={(210:0.2)}]v6.210) {}; 
    
    \end{tikzpicture}
    \end{minipage}
    \hspace{0.4cm}
    \begin{minipage}{0.2\textwidth}
    \begin{tikzpicture}[scale=1.2]
    \node[] (c) {};
    \path (c) ++(90:1) node [draw,circle,inner sep=0.7mm] (v1) {$v_1$};
    \path (c) ++(90+360/7:1) node [draw,circle,inner sep=0.7mm] (v7) {$v_7$};
    \path (c) ++(90+2*360/7:1) node [draw,circle,inner sep=0.7mm] (v6) {$v_6$};
    \path (c) ++(90+3*360/7:1) node [draw,circle,inner sep=0.7mm] (v5) {$v_5$};
    \path (c) ++(90+4*360/7:1) node [draw,circle,inner sep=0.7mm] (v4) {$v_4$};
    \path (c) ++(90+5*360/7:1) node [draw,circle,inner sep=0.7mm] (v3) {$v_3$};
    \path (c) ++(90+6*360/7:1) node [draw,circle,inner sep=0.7mm] (v2) {$v_2$};
    
    \draw (v1) -- (v2) -- (v3) -- (v4) -- (v5) -- (v6) -- (v7) -- (v1);

    \node at ([shift={(90:0.2)}]v1.90) {1}; 
    \node at ([shift={(60:0.2)}]v2.60) {1}; 
    \node at ([shift={(270:0.2)}]v4.270) {};
    \node at ([shift={(210:0.2)}]v6.210) {}; 
    
    \end{tikzpicture}
    \end{minipage}
    \hspace{0.4cm}
    \begin{minipage}{0.2\textwidth}
    \begin{tikzpicture}[scale=1.2]
    \node[] (c) {};
    \path (c) ++(90:1) node [draw,circle,inner sep=0.7mm] (v1) {$v_1$};
    \path (c) ++(90+360/7:1) node [draw,circle,inner sep=0.7mm] (v7) {$v_7$};
    \path (c) ++(90+2*360/7:1) node [draw,circle,inner sep=0.7mm] (v6) {$v_6$};
    \path (c) ++(90+3*360/7:1) node [draw,circle,inner sep=0.7mm] (v5) {$v_5$};
    \path (c) ++(90+4*360/7:1) node [draw,circle,inner sep=0.7mm] (v4) {$v_4$};
    \path (c) ++(90+5*360/7:1) node [draw,circle,inner sep=0.7mm] (v3) {$v_3$};
    \path (c) ++(90+6*360/7:1) node [draw,circle,inner sep=0.7mm] (v2) {$v_2$};
    
    \draw (v1) -- (v2) -- (v3) -- (v4) -- (v5) -- (v6) -- (v7) -- (v1);

    \node at ([shift={(90:0.2)}]v1.90) {1}; 
    \node at ([shift={(60:0.2)}]v2.60) {1}; 
    \node at ([shift={(270:0.2)}]v4.270) {1};
    \node at ([shift={(210:0.2)}]v6.210) {}; 
    
    \end{tikzpicture}
    \end{minipage}
    \hspace{0.4cm}
    \begin{minipage}{0.2\textwidth}
    \begin{tikzpicture}[scale=1.2]
    \node[] (c) {};
    \path (c) ++(90:1) node [draw,circle,inner sep=0.7mm] (v1) {$v_1$};
    \path (c) ++(90+360/7:1) node [draw,circle,inner sep=0.7mm] (v7) {$v_7$};
    \path (c) ++(90+2*360/7:1) node [draw,circle,inner sep=0.7mm] (v6) {$v_6$};
    \path (c) ++(90+3*360/7:1) node [draw,circle,inner sep=0.7mm] (v5) {$v_5$};
    \path (c) ++(90+4*360/7:1) node [draw,circle,inner sep=0.7mm] (v4) {$v_4$};
    \path (c) ++(90+5*360/7:1) node [draw,circle,inner sep=0.7mm] (v3) {$v_3$};
    \path (c) ++(90+6*360/7:1) node [draw,circle,inner sep=0.7mm] (v2) {$v_2$};
    
    \draw (v1) -- (v2) -- (v3) -- (v4) -- (v5) -- (v6) -- (v7) -- (v1);

    \node at ([shift={(90:0.2)}]v1.90) {1}; 
    \node at ([shift={(60:0.2)}]v2.60) {1}; 
    \node at ([shift={(270:0.2)}]v4.270) {1};
    \node at ([shift={(210:0.2)}]v6.210) {1}; 
    
    \end{tikzpicture}
    \end{minipage}
    
\caption{$C_7$, Case 1.1, where Player 2 is in a winning position.}
\label{fig:C_7game1}
\end{figure}
        \item[1.2.] \emph{Player 1 plays on $v_5$.} By playing here, Player 1 creates a game state where every remaining unmarked vertex is adjacent to a $1$. Because there already exists a $\{1, 1\}$ edge, $1$ can no longer be played. Because every remaining vertex is adjacent to a $1$, each remaining color can only be played once. However, since the EDCN is 4, only three colors remain: $2$, $3$, and $4$. Thus, since Player 2 is the next to play, Player 2 will also be last one to play. At the end of the game there will be one remaining vertex, but no available colors that will not create a duplicate edge, so it is unmarkable. Therefore, by being the last to play, Player 2 wins.
    \end{enumerate}
    \item \underline{Case 2:} \emph{Player 1 plays another color.} Assume, without loss of generality, that this color is $2$. From here, there are 3 subcases. 
    \begin{enumerate}
        \item[2.1.] \emph{Player 1 plays on $v_3$ or $v_7$.} Player 2 should play another $2$ on $v_5$, resulting in a graph isomorphic to the one in Figure \ref{fig:C_7game2.1}, so we can assume this graph to finish the game. Player 2 makes $v_4$ an unmarkable vertex and leaves two remaining markable vertices, neither of which can be made unmarkable since there is already a $\{1, 2\}$ edge, so 1 cannot be played on $v_6$ and 2 cannot be played on $v_7$. Because only two colors have been used up to this point and the EDCN is 4, both $3$ and $4$ can be played on these remaining vertices. Therefore, whichever color Player 1 plays, Player 2 can simply play a $3$ on the other vertex, winning the game.
    \begin{figure}[H]
    \begin{tikzpicture}[scale=1.2]
    \node[] (c) {};
    \path (c) ++(90:1) node [draw,circle,inner sep=0.7mm] (v1) {$v_1$};
    \path (c) ++(90+360/7:1) node [draw,circle,inner sep=0.7mm] (v7) {$v_7$};
    \path (c) ++(90+2*360/7:1) node [draw,circle,inner sep=0.7mm] (v6) {$v_6$};
    \path (c) ++(90+3*360/7:1) node [draw,circle,inner sep=0.7mm] (v5) {$v_5$};
    \path (c) ++(90+4*360/7:1) node [draw,circle,inner sep=0.7mm] (v4) {$v_4$};
    \path (c) ++(90+5*360/7:1) node [draw,circle,inner sep=0.7mm] (v3) {$v_3$};
    \path (c) ++(90+6*360/7:1) node [draw,circle,inner sep=0.7mm] (v2) {$v_2$};
    
    \draw (v1) -- (v2) -- (v3) -- (v4) -- (v5) -- (v6) -- (v7) -- (v1);

    \node at ([shift={(90:0.2)}]v1.90) {1}; 
    \node at ([shift={(60:0.2)}]v2.60) {1}; 
    \node at ([shift={(330:0.2)}]v3.330) {2};
    \node at ([shift={(210:0.2)}]v5.210) {2}; 
    
    \end{tikzpicture}
    \caption{$C_7$, Case 2.2, where Player 2 is in a winning position.}\label{fig:C_7game2.1}
    \end{figure}
        
        \item[2.2.] \emph{Player 1 plays on $v_4$ or $v_6$.} Player 2's winning move in this case is to play another $2$ on the other vertex of the two, which results in the game board shown in Figure \ref{fig:C_7game}. This creates one unmarkable vertex at $v_5$, leaving the only markable vertices as $v_3$ and $v_7$. 
        If Player 1 chooses to play $2$, Player 2 to plays $3$ on the other vertex. If Player 1 does not play $2$, Player 2 plays $2$ on the other vertex. This completes to board, and Player 2 wins.
    \begin{figure}[H]
        
    \begin{tikzpicture}[scale=1.2]
    \node[] (c) {};
    \path (c) ++(90:1) node [draw,circle,inner sep=0.7mm] (v1) {$v_1$};
    \path (c) ++(90+360/7:1) node [draw,circle,inner sep=0.7mm] (v7) {$v_7$};
    \path (c) ++(90+2*360/7:1) node [draw,circle,inner sep=0.7mm] (v6) {$v_6$};
    \path (c) ++(90+3*360/7:1) node [draw,circle,inner sep=0.7mm] (v5) {$v_5$};
    \path (c) ++(90+4*360/7:1) node [draw,circle,inner sep=0.7mm] (v4) {$v_4$};
    \path (c) ++(90+5*360/7:1) node [draw,circle,inner sep=0.7mm] (v3) {$v_3$};
    \path (c) ++(90+6*360/7:1) node [draw,circle,inner sep=0.7mm] (v2) {$v_2$};
    
    \draw (v1) -- (v2) -- (v3) -- (v4) -- (v5) -- (v6) -- (v7) -- (v1);

    \node at ([shift={(90:0.2)}]v1.90) {1}; 
    \node at ([shift={(60:0.2)}]v2.60) {1}; 
    \node at ([shift={(270:0.2)}]v4.270) {2};
    \node at ([shift={(210:0.2)}]v6.210) {2}; 
    
    \end{tikzpicture}
    
        \caption{$C_7$, Case 2.2, where Player 2 is in a winning position.}\label{fig:C_7game}
        \end{figure}
        \item[2.3.] \emph{Player 1 plays on $v_5$.} Player 2's winning move in this case is to play another $2$ on either $v_7$ or $v_3$. This creates one unmarkable vertex between it and $v_5$. This game state is isomorphic to the one described in 2.1, so Player 2 will win in this case.
    \end{enumerate}
    
\end{proof}

\subsection{Triangular Chorded Cycles}\label{subsec:triangular-chorded}

\noindent A triangular chorded cycle $C^{\{1,3\}}_n$ is a cycle $C_n$ with a single chord added between vertices $v_1$ and $v_3$, forming a triangle within the cycle. These belong to the larger family of chorded cycles $C^{\{1,j\}}_n$ where a chord may be added between any two non-adjacent vertices.

We used the code mentioned in Section \ref{sec:computation} to determine the winning players for the triangular chorded cycles $C^{\{1,3\}}_n$ with $4 \leq n \leq 8$. The results for these graphs are included in Table \ref{table:tcc}. The EDCNs are results from \cite{fickes2021edge}.

\begin{table}[h!]
\centering
    \begin{tabular}{ |c|c|c|  }
        \hline
        Graph& EDCN& Winning Player\\
        \hline
        $C^{\{1,3\}}_4$ &4&Player 2\\
        $C^{\{1,3\}}_5$ &4&Player 1\\
        $C^{\{1,3\}}_6$ &4&Player 1\\
        $C^{\{1,3\}}_7$ &4&Player 2\\
        $C^{\{1,3\}}_8$ &5&Player 1\\
        \hline
    \end{tabular}
    \caption{Results for triangular chorded cycles}
    \label{table:tcc}
\end{table}

\subsection{Miscellaneous Graphs}\label{subsec:misc}
Alongside those that were previously discussed, we determined which player has the winning strategy for a variety of other graphs. These graphs are shown in Table \ref{table:strats}. The majority of the EDCNs of these graphs were checked by the code in \cite{edge_python}, and the winning strategies were checked by the code in \cite{edge_game_python}. It should be noted that $T_n$ represents the triangular ladder graph on $n$ vertices, that is, the graph whose adjacency matrix is given by the $n\times n$ Toeplitz matrix
\[
A_{ij} = 
\begin{cases}
    1 & \text{ if } |i-j|\in\{1,2\}\\
    0 & \text{ else}
\end{cases}.
\]

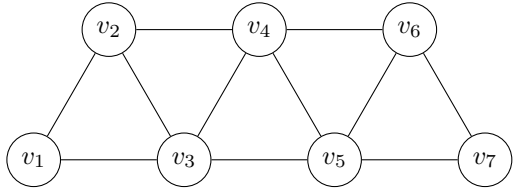
\begin{figure}[H]
    \centering
    \begin{tikzpicture}[scale=1]
    \node[draw,circle] (v1) {$v_1$};
    \path (v1) ++(60:2) node [draw,circle] (v2) {$v_2$};
    \path (v2) ++(-60:2) node [draw,circle] (v3) {$v_3$};
    \path (v3) ++(60:2) node [draw,circle] (v4) {$v_4$};
    \path (v4) ++(-60:2) node [draw,circle] (v5) {$v_5$};
    \path (v5) ++(60:2) node [draw,circle] (v6) {$v_6$};
    \path (v6) ++(-60:2) node [draw,circle] (v7) {$v_7$};
    
    \draw (v1) -- (v2) -- (v3) -- (v4) -- (v5) -- (v6) -- (v7) -- (v5) -- (v3) -- (v1);
    \draw (v2) -- (v4) -- (v6);
    \end{tikzpicture}
\caption{The triangular ladder graph on 7 vertices, $T_7$.}
\label{fig:T_7}
\end{figure}

\begin{table}[h!]
\centering
    \begin{tabular}{ |c|c|  }
        \hline
        Graph& Winning Player\\
        \hline
        $C_8$ &Player 2\\
        $P_7$ &Player 2\\
        Cube &Player 2\\
        Octahedron &Player 2\\
        Petersen Graph &Player 1\\
        Moser Spindle &Player 1\\
        Envelope Graph &Player 1\\
        $T_4$ &Player 2\\
        $T_5$ &Player 2\\
        $T_6$ &Player 2\\
        $T_7$ &Player 2\\
        $T_8$ &Player 2\\
        \hline
    \end{tabular}
    \vspace{0.2cm}
    \caption{Table of Winning Strategies for Various Graphs}
    \label{table:strats}
\end{table}

For most of the graphs in Table \ref{table:strats}, we were not able to identify a clear, easy-to-describe strategy. Rather, besides the first and sometimes second move, the optimal move for the winning player appears to be highly dependent on the previous player's move.
One example where this is not the case is the Moser Spindle graph. The Moser Spindle graph has a notable strategy where the player with the winning strategy, Player 1, wins the game by winning on subgraphs of the Moser Spindle. This is demonstrated below in the proof of Theorem \ref{MSTP}.

\begin{thm}\label{MSTP}
    For the Moser Spindle $M$, Player 1 has the winning strategy.
\end{thm}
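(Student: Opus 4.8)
The plan is to have Player 1 open at the unique vertex of degree four, the ``hub'' $v_0$ of the spindle, and then to play a reflection-type strategy between the two ``diamond'' subgraphs that meet at $v_0$. Label the spindle so that $v_0$ is adjacent to $v_1,v_2,v_4,v_5$, with triangles $\{v_1,v_2,v_3\}$ and $\{v_4,v_5,v_6\}$ and one extra edge $v_3v_6$; write $R_1=\{v_0,v_1,v_2,v_3\}$, $R_2=\{v_0,v_4,v_5,v_6\}$ (each a $K_4$ minus an edge), and let $\sigma$ be the automorphism fixing $v_0$ and swapping $v_i\leftrightarrow v_{i+3}$ for $i=1,2,3$. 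Up to the automorphism group of $M$ and relabeling colors, Player 1's first move is uniquely $v_0\mapsto 1$.

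First I would record the arithmetic. From the code (or a short direct argument) $\lambda(M)=6$: the five vertices $\{v_0,v_1,v_2,v_4,v_5\}$ must all get distinct colors because $v_1,v_2,v_4,v_5$ are each adjacent to $v_0$, and then $v_3$ needs a sixth color (it avoids the colors of $v_1,v_2$ by being in their triangle, avoids $c(v_0)$ through the edge $v_0v_1$, and avoids $c(v_4),c(v_5)$ through the edge $v_3v_6$); on the other hand $(v_0,\dots,v_6)\mapsto(1,2,3,6,4,5,6)$ is edge-distinguishing, so six colors suffice. Since $M$ has an odd number of vertices and the game uses six colors, the game lasts an odd number of moves exactly when the board ends up completely colored; equivalently, \emph{Player 1 wins if and only if an even number of vertices end up unmarkable}. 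This is the quantity Player 1 must control.

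Next I would set up the reply rule and the key structural facts. After $v_0\mapsto 1$, the four inner vertices $v_1,v_2,v_4,v_5$ are all adjacent to $v_0$, so each color $c\neq 1$ can appear at most once among them and color $1$ at most once among them; this pins down the game on the inner vertices almost completely. Player 1's rule is: whenever Player 2 colors a vertex $u$, Player 1 answers on $\sigma(u)$, using color $c(u)$ when that is legal and otherwise the smallest legal color; if $\sigma(u)$ is already colored or has just become unmarkable, Player 1 instead plays the remaining (third) vertex of the triangle of $M$ that Player 2 just moved in. The six non-hub vertices fall into the three $\sigma$-orbits $\{v_1,v_4\}$, $\{v_2,v_5\}$, $\{v_3,v_6\}$, and using the ``at most once'' facts one checks branch-by-branch that Player 1 always has a legal reply and that these orbits get exhausted two cells at a time, so the number of colored vertices stays odd and Player 1 makes the last move — this is the sense in which Player 1 ``wins on the two diamond subgraphs.''

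The main obstacle is legality and parity bookkeeping in the branches involving the adjacent orbit $\{v_3,v_6\}$: these two vertices are joined by the cross-edge, each also sits in its own triangle, so a move of Player 2 on $v_3$ can in principle render $v_6$ unmarkable, breaking the orbit and flipping the parity. I would resolve this with a focused sub-case analysis showing that whenever Player 2's move on $v_3$ threatens $v_6$, Player 2 has necessarily already committed enough of the triangle $\{v_1,v_2,v_3\}$ that Player 1 can reply so the position terminates with the correct parity anyway — for instance by Player 1 creating a compensating unmarkable vertex in the other triangle — and symmetrically with $v_3,v_6$ interchanged. Combined with the routine verification that Player 1's replies on $v_1,v_2,v_4,v_5$ are never blocked (each has at most three colored neighbors against six available colors), this finishes the proof; the bulk of the work is this case bookkeeping rather than any single trick.
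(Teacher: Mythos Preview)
Your reflection strategy is different from the paper's and, as specified, it fails. Consider Player 2's reply $v_1\mapsto 2$ to your opening $v_0\mapsto 1$. The mirror target is $\sigma(v_1)=v_4$; color $2$ is illegal there (it would duplicate the $\{1,2\}$ edge at $v_0v_1$), and the ``smallest legal color'' on $v_4$ is $1$. But $v_4\mapsto 1$ kills exactly \emph{one} vertex, namely $v_5$ (its two colored neighbors $v_0,v_4$ are both $1$), leaving the three markable vertices $v_2,v_3,v_6$ with Player~2 to move. From here Player~2 wins: for instance $v_6\mapsto 3$, after which a short check shows that whatever Player~1 plays on one of $v_2,v_3$, the other remains markable (there are always at least two unused colors that avoid every existing edge), so Player~2 makes the sixth and final move. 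Thus the parity flip you worry about occurs already in the inner-vertex orbits $\{v_1,v_4\},\{v_2,v_5\}$, not only in the $\{v_3,v_6\}$ orbit you flagged, and the ``compensating'' sub-case analysis you sketch for $\{v_3,v_6\}$ does not touch this line at all.

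The paper's approach avoids mirroring entirely. After $v_0\mapsto 1$, Player~1's second move is to play color $1$ on the far vertex of the diamond \emph{opposite} to wherever Player~2 moved (e.g.\ on $v_6$ if Player~2 moved in $R_1$). This kills \emph{two} vertices at once ($v_4$ and $v_5$, each now adjacent to two $1$'s), leaving exactly two markable vertices with Player~2 to move; one then checks directly that neither can be made unmarkable, so Player~1 makes the fifth and last move. The essential point you missed is that Player~1's second move is not a symmetric reply but a deliberate double-kill that fixes the parity in one stroke.
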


\begin{proof}
    Let $M$ be the Moser Spindle graph, where the vertices are labeled as shown in Figure \ref{MSgame}. Players have the ability to play 6 colors on this graph as $\lambda(M) = 6$. Player 1's first move should be to play $1$ on $v_1$. For the rest of the strategy, imagine a vertical line through $v_1$ dividing $v_4$, $v_5$ and $v_6$ into one group and $v_2$, $v_3$, and $v_7$ into the other. Player 1's strategy is to eliminate the other group that Player 2 does not play on by playing either on $v_3$ or $v_4$. Without loss of generality, assume Player 2 plays on $v_2$ or $v_7$, since all game boards are isomorphic to one of those two moves. To follow the strategy, Player 1 would close off the other group by playing $1$ on $v_4$, making $v_5$ and $v_6$ unmarkable. If Player 2 played a 1 on $v_3$, the game is over and Player 1 has won. Otherwise, Player 2 has played color $i$ on $v_2$ and $v_3$ and $v_7$ are left as the only markable vertices. Note that every vertex is now adjacent to a vertex colored 1, so each color can be used at most once other than $i$, since a $\{1,i\}$ edge already exists. Neither of the remaining vertices can be eliminated, since a 1 cannot be played on $v_3$ as this would also create a duplicate $\{1,i\}$ edge. Hence, the board is in a favorable position for Player 1, since Player 2's turn begins with an even number of vertices left. Therefore, Player 1 will always win using this strategy. This strategy is demonstrated in Figure \ref{MSgame}.
\end{proof}
\begin{figure}[H]
\centering
\resizebox{1\textwidth}{!}{%
\begin{circuitikz}
\tikzstyle{every node}=[font=\Huge]
\draw [ line width=0.8pt ] (3.75,62.5) circle (0.75cm) node {\Huge $v_1$} ;
\draw [, line width=0.8pt ] (-1.25,60) circle (0.75cm) node {\Huge $v_5$} ;
\draw [, line width=0.8pt ] (8.75,60) circle (0.75cm) node {\Huge $v_2$} ;
\draw [, line width=0.8pt ] (5,57.5) circle (0.75cm) node {\Huge $v_7$} ;
\draw [, line width=0.8pt ] (2.5,57.5) circle (0.75cm) node {\Huge $v_6$} ;
\draw [, line width=0.8pt ] (-2.5,55) circle (0.75cm) node {\Huge $v_4$} ;
\draw [, line width=0.8pt ] (10,55) circle (0.75cm) node {\Huge $v_3$} ;
\draw [line width=1.4pt, short] (-1,60.75) -- (3,62.25);
\draw [line width=1.4pt, short] (8.5,60.75) -- (4.5,62.25);
\draw [line width=1.4pt, short] (-0.75,59.5) -- (1.75,57.75);
\draw [line width=1.4pt, short] (8.25,59.5) -- (5.75,57.75);
\draw [line width=1.4pt, short] (3.5,61.75) -- (2.75,58.25);
\draw [line width=1.4pt, short] (4,61.75) -- (4.75,58.25);
\draw [line width=1.4pt, short] (-1.75,59.5) -- (-2.5,55.75);
\draw [line width=1.4pt, short] (2,57) -- (-1.75,55.25);
\draw [line width=1.4pt, short] (5.5,57) -- (9.25,55);
\draw [line width=1.4pt, short] (9.25,59.5) -- (10,55.75);
\draw [line width=1.4pt, short] (-1.75,54.75) -- (9.25,54.75);

\draw [, line width=0.8pt ] (13.75,55) circle (0.75cm) node {\Huge $v_4$} ;
\draw [, line width=0.8pt ] (15,60) circle (0.75cm) node {\Huge $v_5$} ;
\draw [, line width=0.8pt ] (18.75,57.5) circle (0.75cm) node {\Huge $v_6$} ;
\draw [, line width=0.8pt ] (21.25,57.5) circle (0.75cm) node {\Huge $v_7$} ;
\draw [, line width=0.8pt ] (26.25,55) circle (0.75cm) node {\Huge $v_3$} ;
\draw [, line width=0.8pt ] (25,60) circle (0.75cm) node {\Huge $v_2$} ;
\draw [, line width=0.8pt ] (20,62.5) circle (0.75cm) node {\Huge $v_1$} ;
\draw [, line width=0.8pt ] (30,55) circle (0.75cm) node {\Huge $v_4$} ;
\draw [, line width=0.8pt ] (31.25,60) circle (0.75cm) node {\Huge $v_5$} ;
\draw [, line width=0.8pt ] (35,57.5) circle (0.75cm) node {\Huge $v_6$} ;
\draw [, line width=0.8pt ] (37.5,57.5) circle (0.75cm) node {\Huge $v_7$} ;
\draw [, line width=0.8pt ] (42.5,55) circle (0.75cm) node {\Huge $v_3$} ;
\draw [, line width=0.8pt ] (41,60) circle (0.75cm) node {\Huge $v_2$} ;
\draw [, line width=0.8pt ] (36.25,62.5) circle (0.75cm) node {\Huge $v_1$} ;
\draw [line width=1.4pt, short] (19.25,62.25) -- (15.25,60.75);
\draw [line width=1.4pt, short] (20.75,62.25) -- (25,60.75);
\draw [line width=1.4pt, short] (19.75,61.75) -- (19.25,58);
\draw [line width=1.4pt, short] (20.5,62) -- (21,58.25);
\draw [line width=1.4pt, short] (14.5,59.5) -- (13.75,55.75);
\draw [line width=1.4pt, short] (18.25,57) -- (14.5,55);
\draw [line width=1.4pt, short] (21.75,57) -- (25.5,55);
\draw [line width=1.4pt, short] (25.5,59.5) -- (26.25,55.75);
\draw [line width=1.4pt, short] (14.5,54.75) -- (25.5,54.75);
\draw [line width=1.4pt, short] (35.5,62.25) -- (31.5,60.75);
\draw [line width=1.4pt, short] (37,62.25) -- (40.5,60.5);
\draw [line width=1.4pt, short] (36,61.75) -- (35.25,58.25);
\draw [line width=1.4pt, short] (36.5,61.75) -- (37,58);
\draw [line width=1.4pt, short] (32,59.75) -- (34.5,58);
\draw [line width=1.4pt, short] (40.5,59.5) -- (38.25,57.75);
\draw [line width=1.4pt, short] (30.75,59.5) -- (30,55.75);
\draw [line width=1.4pt, short] (34.5,57) -- (30.75,55);
\draw [line width=1.4pt, short] (38,57) -- (41.75,55.25);
\draw [line width=1.4pt, short] (41.5,59.25) -- (42.5,55.75);
\draw [line width=1.4pt, short] (30.75,54.75) -- (41.75,54.75);
\draw [line width=1.4pt, short] (24.5,59.5) -- (22,57.75);
\draw [line width=1.4pt, short] (15.5,59.5) -- (18,57.75);
\node [font=\Huge] at (3.75,63.75) {1};
\node [font=\Huge] at (20,64) {1};
\node [font=\Huge] at (36,64) {1};
\node [font=\Huge] at (26,61) {$i$};
\node [font=\Huge] at (30,53.5) {1};
\node [font=\Huge] at (42,61) {$i$};
\end{circuitikz}
}%
\caption{Moser Spindle, where Player 1 is in a winning position.}
\label{MSgame}
\end{figure}
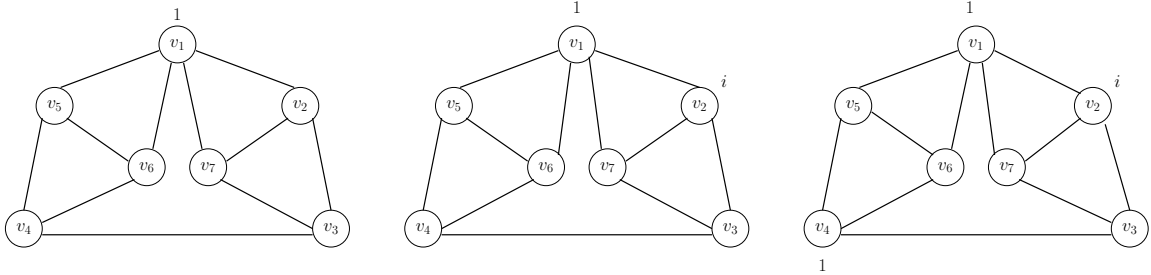

\section{Future Work}\label{sec:future-work}

Here we present a number of directions in which the game EDGe could be explored further. Since the EDCN for paths, cycles, and triangular chorded cycles are known \cite{frank1982edcn, al1988edge}, and since we were only able to determine which player has the winning strategy for particular cases of paths, cycles, and triangular chorded cycles, the following is an obvious direction this work could be extended.
\begin{question}
    Is there an overarching pattern for which player has the winning strategy for the families of paths, cycles, or triangular chorded cycles?
\end{question}
Formulas for the EDCN of spider graphs with three legs, generic chorded cycles, and petal graphs have also been found \cite{fickes2021edge}, so these classes of graphs could be explored as well.

\begin{question}
    Is there a pattern for which player has the winning strategy for the families of spider graphs, generic chorded cycles, or petal graphs?
\end{question}

In Table \ref{table:strats} we can see that Player 2 has the winning strategy on the triangular ladder graphs $T_n$ for $n\in \{4,5,6,7,8\}$. This observation leads to the following question.

\begin{question}
    Does Player 2 always have the winning strategy on $T_n$ for $n\geq 4$?
\end{question}

We are also interested in identifying overarching strategies for certain graph structures. In some cases, there may be a more generic strategy for a player to win, similar to the mirror-reverse strategy for symmetric graphs in the Game of Cycles \cite{alvarado2021game}. The example of the winning strategy in the Moser-Spindle is suggestive of a potential generic strategy to look for.

\begin{question}
    Are there graphs for which a player can win by winning on subgraphs, similar to the case of the Moser-Spindle?
\end{question}
In a similar vein, it seems possible that the winning strategy of a graph constructed via a binary graph operation could be dependent on the winning strategy of the initial graphs, in some cases. So we pose the following question as well.
\begin{question}
    Given two graphs $G_1$ and $G_2$ and a binary graph operation $*$, are there instances when the winning strategy on $G_1*G_2$ is dependent on the winning strategies on $G_1$ and $G_2$?
\end{question}
As all the graphs we looked at were connected, one binary operation that could be interesting to consider is unions of graphs, as this could result in disconnected graphs.

All graphs considered in this paper were also simple. However, it is possible to play EDGe on some types of non-simple graphs, such as those with loops. Playing on a graph multiple edges is likely less interesting because if two vertices are connected by multiple edges, at most one of those vertices could ever be legally colored. 
As a straightforward example a winning strategy for a non-simple graph with loops, consider the following.

\begin{thm}
    For complete graphs with loops on every vertex, $K^*_n$, with $n \ge 3$, Player 2 has the winning strategy if $n$ is even, and Player 1 has the winning strategy if $n$ is odd.
\end{thm}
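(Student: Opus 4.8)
The plan is to show that on $K_n^*$ the game is completely forced: every legal play lasts exactly $n$ moves, so the identity of the winner depends only on the parity of $n$, with no actual strategy required by either player.

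First I would pin down the EDCN. Since $K_n^*$ has a loop at every vertex, the induced edge-coloring of Definition~\ref{def:edge-distinguishing} assigns to the loop at $v$ the multiset $\{c(v),c(v)\}$, and two loop-colors coincide precisely when the two vertices receive the same color. Hence any edge-distinguishing coloring of $K_n^*$ must be injective on vertices; conversely any bijection $V(K_n^*)\to[n]$ is edge-distinguishing, because the loop colors $\{i,i\}$ are pairwise distinct and distinct from every off-diagonal pair $\{i,j\}$ with $i\ne j$, and the off-diagonal pairs are themselves pairwise distinct. So $\lambda(K_n^*)=n$, and by Definition~\ref{def:EDGe-rules} the players color using the $n$ colors of $[n]$.

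The crucial step is the claim that no uncolored vertex is ever unmarkable: at any game state reached by legal play, an uncolored vertex $v$ can be legally colored with \emph{any} color not yet used. Here is the verification I would carry out. Legality has forced the already-colored vertices to carry pairwise distinct colors (by the loop observation above); let $a\in[n]$ be any unused color. Coloring $v$ with $a$ introduces the loop color $\{a,a\}$ and the incident edge colors $\{a,c(u)\}$ as $u$ ranges over the colored vertices. Every previously present edge color is either a loop color $\{c(w),c(w)\}$ or an off-diagonal pair $\{c(w_1),c(w_2)\}$, and in all such colors the entries are used colors, hence $\ne a$; since $a$ occurs in each of the new colors but in none of the old ones, no new color collides with an old one. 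The new colors are also mutually distinct since $\{a,a\}\ne\{a,c(u)\}$ and the $c(u)$ are distinct. Thus the move is always legal. Because a vertex is colored on every turn and, so long as an uncolored vertex remains, some color is still unused (if all $n$ colors were used, all $n$ vertices would be colored), the game necessarily continues until all $n$ vertices are colored — exactly $n$ moves.

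Finally, since exactly $n$ moves are made and by rule~(3) the player making the last move wins, the conclusion is immediate: if $n$ is even, Player~2 makes moves $2,4,\dots,n$ and wins; if $n$ is odd, Player~1 makes moves $1,3,\dots,n$ and wins. The only step needing genuine care is the ``no unmarkable vertex'' claim, that is, checking that introducing a fresh color never duplicates an edge color (the four sub-cases: new loop vs.\ old loops, new loop vs.\ new incident edges, new incident edges vs.\ old edges and loops, and new incident edges among themselves); everything else is a parity count.
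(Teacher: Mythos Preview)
Your proof is correct and follows the same line as the paper's: loops force all played colors to be distinct, so the game lasts exactly $n$ moves and parity decides the winner. You are in fact more careful than the paper, which asserts that ``the next move and all subsequent moves must be made with a distinct color'' and then jumps directly to the parity conclusion without explicitly verifying, as you do, that a fresh color is always a legal move.
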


\begin{proof}
    In $K^*_n$, since every vertex has a common neighbor with every other vertex, $\lambda(K^*_n) = n$. Once a color, say $1$, is played on any vertex, since there is a loop on every vertex, an edge of color $\{1, 1\}$ is created. Thus, the next move and all subsequent moves must be made with a distinct color. Therefore, the player that wins the game will be to be the last person to label a vertex. Consequently, if $n$ is odd Player 1 wins, and if $n$ is even, Player 2 wins.
\end{proof}

Notice that adding loops to a complete graph results in a different pattern of which player has the winning strategy (compare Theorem \ref{thm:complete}). Thus, one question that could be of interest is the following.
\begin{question}
    When and how does adding one or more loops to a simple graph affect which player has the winning strategy compared to the original simple graph?
\end{question}

Alternatively, in light of Theorem \ref{thm:numcolors}, one could relax the constraint that the number of colors available to the players is given by the EDCN.
\begin{question}
    In what cases does changing the number of available colors change who has the winning strategy for a graph?
\end{question}

Finally, it would be useful to identify ways to optimize the code mentioned in Section \ref{sec:computation} as the current code is infeasible to run on larger graphs. 
Two ways that the code could potentially be optimized include taking advantage of the fact that some colorings determine equivalent game states up to permutation of the colors, and taking advantage of the fact that some game states are equivalent up to graph isomorphisms. Additionally, one may be able to leverage the fact that after creating one or more unmarkable vertices, the game state may be equivalent to a game state on another game board. For example, $C_7$ with vertices $v_1$ and $v_3$ colored the same creates an unmarkable vertex and results in a game state equivalent to $P_6$ with $v_1$ and $v_6$ colored the same.
Such improvements could decrease the number of boards needed to construct the digraph, which could allow for the program to determine winning strategies for larger graphs in a more reasonable amount of time. Other improvements would be of interest as well.

\bibliographystyle{amsalpha}
\bibliography{references.bib}

\end{document}